\theoremstyle{definition}
\newtheorem{thm}{Theorem}[section]
\newtheorem{thm*}{Theorem}
\newtheorem{defi*}{Definition}
\newtheorem{lem}[thm]{Lemma}
\newtheorem{lem*}{Lemma}
\newtheorem{pro}[thm]{Proposition}
\newtheorem{pro*}{Proposition}
\newtheorem{cor}[thm]{Corollary}
\newcommand{\MC}[1]{\mathcal{#1}}
\newcommand{\MB}[1]{\mathbb{#1}}
\newcommand{\BM}[1]{{\bm #1}}
\newcommand{\marukakko}[1]{\left( #1 \right)}
\newcommand{\G}{\Gamma}
\newcommand{\NH}{\tilde{H}}
\DeclareMathOperator{\Spec}{Spec}
\DeclareMathOperator{\Sym}{Sym}
\DeclareMathOperator{\sgn}{sgn}
\DeclareMathOperator{\diag}{diag}
\title[
Periodicity of mixed paths and mixed cycles
]{
Periodicity of quantum walks defined by mixed paths and mixed cycles
}
\author[S. Kubota]{Sho Kubota}
\author[H. Sekido]{Hiroto Sekido}
\author[H. Yata]{Harunobu Yata}
\address{Department of Applied Mathematics, Faculty of Engineering, Yokohama National University,
Hodogaya, Yokohama 240-8501, Japan}
\email{kubota-sho-bp@ynu.ac.jp}
\address{Department of Applied Mathematics, Graduate School of Engineering Science,
Yokohama National University, Hodogaya, Yokohama, 240-8501, Japan}
\email{sekido-hiroto-zk@ynu.jp}
\address{Department of Applied Mathematics, Graduate School of Engineering Science,
Yokohama National University, Hodogaya, Yokohama, 240-8501, Japan}
\email{yata-harunobu-dx@ynu.jp}
\date{}
\keywords{
Quantum walk, periodicity, mixed graph,
Hermitian adjacency matrix, spectral graph theory
}
\subjclass[2010]{05C50; 81Q99; 05C20; 05C81}
\begin{document}
\maketitle
\begin{abstract}
In this paper,
we determine periodicity of quantum walks defined by mixed paths and mixed cycles.
By the spectral mapping theorem of quantum walks,
consideration of periodicity is reduced to eigenvalue analysis of $\eta$-Hermitian adjacency matrices.
First,
we investigate coefficients of the characteristic polynomials of $\eta$-Hermitian adjacency matrices.
We show that the characteristic polynomials of mixed trees and their underlying graphs are same.
We also define $n+1$ types of mixed cycles
and show that every mixed cycle is switching equivalent to one of them.
We use these results to discuss periodicity.
We show that the mixed paths are periodic for any $\eta$.
In addition,
we provide a necessary and sufficient condition for a mixed cycle to be periodic and determine their periods.

\end{abstract}

\section{Introduction}

Quantum walks are quantum analogues of classical random walks \cite{AAKV, ADZ, Gu}.
In the last two decades,
a great deal of research on quantum walks has been carried out,
and they have strong connections with various fields.
In quantum information,
quantum walk models can be seen as a generalization of Grover's search algorithm \cite{Gr, P}.
An important fact in mathematics is the spectral mapping theorem of quantum walks \cite{HKSS2014, KSY}.
The spectral mapping theorems reduce eigenvalue analysis of time evolution operators
to eigenvalue analysis of other self-adjoint operators.
They bring quantum walks into close connection with functional analysis \cite{SS}
and spectral graph theory \cite{KST}.
In the study of periodicity of discrete-time quantum walk,
field theory and algebraic number theory have also been leveraged \cite{KKKS, SMA}.
Studies of perfect state transfer in continuous-time quantum walks have also been done
by algebraic graph theory and algebraic combinatorics.
We refer to Godsil's survey \cite{G2012}.
Recent studies on state transfer are in \cite{BMW, LW, LLZZ, MDDT, WL, Z}.

\subsection{Related works to periodicity}

The topic discussed in this paper is periodicity of discrete-time quantum walks.
The works of \cite{HKSS2017, KoST} triggered off studies of periodicity on various graphs.
For example,
the studies done on Grover walks can be summarized in Table~\ref{80}.
\begin{table}[h]
  \centering
  \begin{tabular}{|c|c|}
\hline
Graphs & Ref. \\
\hline
\hline
Complete graphs, complete bipartite graphs, SRGs & \cite{HKSS2017} \\ \hline
Generalized Bethe trees & \cite{KSTY2018} \\ \hline
Distance regular graphs & \cite{Y2019} \\ \hline
Cycle (3-state) & \cite{KKKS} \\ \hline
Complete graphs with self loops & \cite{IMT} \\ \hline
\end{tabular}
 \caption{Prior works on periodicity of Grover walks on undirected graphs} \label{80}
\end{table}
In other models,
periodicity of Fourier walks has been considered by Saito \cite{S},
and periodicity of staggered walks has been studied in \cite{KSTY2019}.
Recently, periodicity of quantum walks with generalized Grover coins has been considered
by Sarkar et al~\cite{SMA}.

\subsection{Main Results}
In this paper, we study periodicity of mixed graphs.
There are three main theorems.
See later sections for more detailed terms and definitions.
First, we generalize the result in \cite{AGNN} related to classification of mixed cycles to $\eta$-Hermitian adjacency matrices.
In \cite{AGNN},
Akbari et al~provided several typical switching functions and classified mixed cycles into three types.
We give similar considerations in $\eta$-Hermitian adjacency matrices.
Among the four types of switching defined in \cite{AGNN},
one switching cannot be used in $\eta$-Hermitian adjacency matrices.
Due to this,
we show that there are at most $n+1$ switching equivalence classes of mixed cycles
in $\eta$-Hermitian adjacency matrices.
The claim is as follows:

\begin{thm} \label{Main1}
{\it
Let $G = (V, \MC{A})$ be a mixed cycle of length $n$.
Then, there exists $j \in \{0,1,\dots, n\}$ such that $G$ and $C_n^j$ is $H_{\eta}$-cospectral.
Moreover, we have
\[ \det H_{\eta}(C_n^j) = (-1)^{n+1} 2 \cos (\eta j) + (-1)^{\lfloor \frac{n}{2} \rfloor} (1 + (-1)^n). \]
}
\end{thm}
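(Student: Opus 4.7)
I would attack the statement in two parts: (i) a switching-reduction of an arbitrary mixed cycle to a canonical representative $C_n^j$, and (ii) a direct determinant computation of $C_n^j$.

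For part (i), the central object is the cyclic-product invariant. Fix a cyclic labeling $v_1, v_2, \ldots, v_n$ of the cycle and set
\[
\pi(G) := \prod_{i=1}^{n} \bigl(H_\eta(G)\bigr)_{v_i, v_{i+1}}, \qquad v_{n+1} := v_1.
\]
Since each factor lies in $\{1, e^{i\eta}, e^{-i\eta}\}$, we have $\pi(G) = e^{ik\eta}$ for some integer $k$ with $-n \le k \le n$. A diagonal unitary switching $D_\sigma$ multiplies each entry $(v_i,v_{i+1})$ by $\overline{\sigma(v_i)}\,\sigma(v_{i+1})$, and these factors telescope to $1$, so $\pi(G)$ is a switching invariant. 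Reversing the cyclic traversal replaces $\pi(G)$ by $\overline{\pi(G)}$, so we may assume $k \geq 0$, i.e.\ $k \in \{0, 1, \ldots, n\}$. Defining $C_n^j$ to be the model mixed cycle with all of its arcs oriented in one rotational direction so that $\pi(C_n^j) = e^{ij\eta}$, I would then sweep the cycle edge by edge: applying a scalar switching at $v_i$ redistributes the phase between the two incident edges, and iterating from $v_1$ to $v_n$ concentrates the total phase on a single designated arc. This produces an explicit switching equivalence $G \sim C_n^k$, and since switching-equivalent $\eta$-Hermitian matrices are unitarily similar, they are $H_\eta$-cospectral.

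For part (ii), expand $\det H_\eta(C_n^j)$ via the Leibniz formula. The cycle sparsity forces only two families of permutations to contribute: the two cyclic rotations of $\{1,\ldots,n\}$, and (only when $n$ is even) the two perfect matchings of the $n$-cycle. The cyclic rotations contribute
\[
(-1)^{n+1}\bigl(\pi(C_n^j) + \overline{\pi(C_n^j)}\bigr) = (-1)^{n+1} \cdot 2\cos(\eta j),
\]
using that an $n$-cycle has sign $(-1)^{n+1}$. Each perfect matching is a product of $n/2$ transpositions with entries of modulus $1$, giving sign $(-1)^{n/2}$ and magnitude $1$; summing over the two matchings yields $(-1)^{n/2} \cdot 2$ when $n$ is even and $0$ when $n$ is odd. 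This is packaged uniformly as $(-1)^{\lfloor n/2 \rfloor}\bigl(1 + (-1)^n\bigr)$, completing the formula.

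The main obstacle is part (i). In \cite{AGNN} four switching types are allowed, but in the $\eta$-Hermitian setting the available switchings are restricted to those diagonal unitaries whose phase differences keep all entries in $\{0, 1, e^{\pm i\eta}\}$. I would need to verify that this smaller family is still rich enough to perform the edge-by-edge phase concentration, and to argue that the resulting invariant $k$ takes values in $\{0,1,\ldots,n\}$ (after possibly reversing the traversal), so that the $n+1$ canonical forms $C_n^0, C_n^1, \ldots, C_n^n$ suffice. Once this normalization is secured, the determinant computation in part (ii) is routine bookkeeping of signs.
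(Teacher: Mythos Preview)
Your plan is sound, and both parts work, but they differ from the paper's arguments in instructive ways.

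For part (ii), your direct Leibniz-formula count is cleaner than what the paper does. The paper computes $\det H_\eta(C_n^j)$ by two rounds of cofactor expansion, reducing to $\det H_\eta(P_{n-2})$, which is handled in a separate lemma. Your observation that the only contributing permutations on a cycle are the two $n$-rotations (yielding $(-1)^{n+1}(\pi + \bar{\pi}) = (-1)^{n+1}2\cos(\eta j)$) and, when $n$ is even, the two perfect matchings (each contributing $(-1)^{n/2}$, since every transposition gives $(H_\eta)_{x,y}(H_\eta)_{y,x}=1$) is correct and bypasses all of that. One small correction: $C_n^j$ is not ``all arcs oriented in one direction'' but rather $j$ consecutive directed arcs together with $n-j$ digons; the property $\pi(C_n^j)=e^{ij\eta}$ you rely on is nevertheless correct.

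For part (i), your cyclic-product (gain) invariant is a genuinely different route from the paper's. The paper introduces three explicit switching moves (their Sw.$2'$, Sw.$3'$, Sw.$4'$) that preserve the mixed-graph structure at every step, and uses them to physically slide all directed arcs into a consecutive block; this establishes the stronger conclusion that $G$ and some $C_n^j$ are switching equivalent as mixed graphs. Your approach buys a shorter proof of the weaker (but sufficient) conclusion of $H_\eta$-cospectrality. The ``obstacle'' you flag is in fact not an obstacle: since the theorem asks only for cospectrality, you are free to conjugate by an \emph{arbitrary} diagonal unitary $D$, not merely one with entries in $\{1,e^{\pm i\eta}\}$. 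With that freedom the sweep is trivial: choose $D$ to normalize the entries at positions $(v_1,v_2),\dots,(v_{n-1},v_n)$ to $1$; the telescoping invariance forces the remaining entry $(v_n,v_1)$ to equal $\pi(G)=e^{ik\eta}$. Doing the same to $C_n^{|k|}$ (and, when $k<0$, using that $(C_n^{|k|})^{-1}$ is isomorphic to $C_n^{|k|}$ so its $H_\eta$ has cyclic product $e^{ik\eta}$) gives the same matrix, hence unitary similarity and cospectrality. So your argument closes without the verification you were worried about; only if you wanted the paper's stronger switching-equivalence statement would you need the restricted moves.
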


The second main theorem is to determine periodicity of mixed paths.
Using the model defined in \cite{KST},
we study periodicity of quantum walks defined by mixed graphs.
This model is defined by both a mixed graph and a real number $\eta \in [0, 2\pi)$.
The claim is as follows:

\begin{thm} \label{Main2}
{\it
Let $G = (V, \MC{A})$ be a mixed path on $n$ vertices equipped with an $\eta$-function $\theta$.
Then $G$ is periodic for any $\eta \in [0, 2\pi)$, and the period is $2(n-1)$.
}
\end{thm}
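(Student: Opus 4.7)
The plan is to reduce the periodicity of the walk on a mixed path to the well-known periodicity of the corresponding walk on the underlying undirected path $P_n$, by combining (i) a switching-equivalence argument that trivializes all edge phases on a tree, with (ii) the spectral mapping theorem of \cite{KST}, which transfers periodicity to a question about the eigenvalues of the (normalized) $\eta$-Hermitian adjacency matrix.

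First, I would verify that the mixed path $G$ is switching equivalent to the plain undirected path $P_n$. The argument is the standard tree one: fix a root $v_0$, and for each vertex $v$ let $\phi_v$ be the phase accumulated along the unique path from $v_0$ to $v$; setting $S=\mathrm{diag}(e^{i\phi_v})$, the conjugation $S^{-1}H_{\eta}(G)S$ equals the adjacency matrix $A(P_n)$. No consistency obstruction appears because $G$ contains no cycle. In particular $H_{\eta}(G)$ and $A(P_n)$ are unitarily similar via a diagonal unitary, consistent with the mixed-tree characteristic polynomial result announced in the introduction. Since the degree matrix $D$ is diagonal and commutes with $S$, the same conjugation maps the normalized discriminant $D^{-1/2}H_{\eta}(G)D^{-1/2}$ onto $D^{-1/2}A(P_n)D^{-1/2}$, so the two share a spectrum.

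Next, I would invoke the spectral mapping theorem: the eigenvalues of the time evolution operator $U$ are obtained from the eigenvalues $\mu$ of the discriminant via $\mu\mapsto e^{\pm i\arccos\mu}$, together with possible $\pm 1$ eigenvalues coming from the kernel of the boundary map. The spectrum of the normalized adjacency matrix of $P_n$ is the classical list
\[
\left\{\cos\frac{k\pi}{n-1}\,:\,k=0,1,\dots,n-1\right\},
\]
so the eigenvalues of $U$ lie in $\{\pm 1\}\cup\{e^{\pm ik\pi/(n-1)}:0\le k\le n-1\}$. Every such number is a $2(n-1)$-th root of unity, so $U^{2(n-1)}=I$ and the period divides $2(n-1)$. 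The index $k=1$ produces $e^{\pm i\pi/(n-1)}$, a primitive $2(n-1)$-th root of unity, which forces the period to be exactly $2(n-1)$ for $n\ge 2$.

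The main obstacle I expect is not the computation itself but the bookkeeping around the KST model: I need to be certain that the normalization entering the discriminant is manifestly switching-invariant, and that the $\pm 1$ eigenvalues produced by the kernel of the boundary map contribute only $1$ and $-1$ (rather than some higher-order root of unity that could enlarge the period). Once those checks are done, all remaining work is the standard spectral calculation on $P_n$ outlined above.
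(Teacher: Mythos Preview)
Your proposal is correct, but the second half follows a different route from the paper. Both arguments begin identically: reduce the mixed path $G$ to the undirected path $P_n$ by showing $\NH_{\eta}$-cospectrality (you do this via an explicit switching along the tree; the paper does it via the characteristic-polynomial argument of Proposition~\ref{12} --- these are equivalent), then invoke the spectral mapping theorem and Lemma~\ref{13} to conclude that the periodicity of $G$ coincides with that of $P_n$.

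Where you diverge is in establishing the period of $P_n$ itself. You compute the spectrum $\{\cos(k\pi/(n-1)):0\le k\le n-1\}$ of $\NH_{\eta}(P_n)$, push it through $e^{\pm i\arccos(\cdot)}$, observe that every eigenvalue of $U_\theta$ is a $2(n-1)$-th root of unity, and pin down the exact period via the primitive root at $k=1$. The paper instead bypasses eigenvalue computation entirely: it uses Lemma~\ref{25} to track $U_\theta\BM{e}^{(a)}$ combinatorially --- the walker simply shuttles along the path, reflects at each endpoint, and returns after $2(n-1)$ steps --- and concludes via Lemma~\ref{55}. The paper even remarks after the proof that the eigenvalue route (yours) is the one already in \cite{KSTY2018}, and presents the dynamical argument as a deliberate alternative. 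Your worry about the extra $\pm 1$ eigenvalues is easily settled: for $P_n$ one has $\tfrac12|\MC{A}^{\pm}|-|V|=-1$ while $\dim\ker(\NH_\eta\mp I)=1$, so $M_{\pm 1}=0$ and no stray eigenvalues appear. In short, your argument is sound and standard; the paper's is more elementary and self-contained, avoiding the need to know the normalized spectrum of $P_n$.
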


The third main theorem is to determine periodicity of mixed cycles.
Since periodicity is determined by eigenvalues,
it is sufficient to consider the only $n+1$ types of mixed cycles by the first main theorem.
The claim is as follows:

\begin{thm} \label{Main3}
{\it
Let $G = (V, \MC{A})$ be a mixed cycle on $n$ vertices equipped with an $\eta$-function $\theta$.
Then,
$G$ is periodic if and only if $\eta \in \MB{Q}\pi$.
In addition,
we suppose that $\eta \in \MB{Q}\pi$ and the mixed cycle $G$ is $H_{\eta}$-cospectral with $C_n^j$.
Let $\eta = \frac{p}{q}\pi$, where $p$ and $q$ are coprime.
Then, the period $\tau$ of $G$ is the following:
\begin{equation} \label{65m}
\tau =
\begin{cases}
\frac{2qn}{(j, 2q)} \quad &\text{if $p$ is odd,} \\
\frac{qn}{(j, q)} \quad &\text{if $p$ is even.} \\
\end{cases}
\end{equation}
}
\end{thm}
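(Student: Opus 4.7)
The plan is to reduce to the classified representatives $C_n^j$ via Theorem~\ref{Main1}, and then combine the spectral mapping theorem of \cite{KST} with an explicit eigenvalue computation. Periodicity of the time evolution operator $U$ is equivalent to every eigenvalue of $U$ being a root of unity. By the spectral mapping theorem, the non-trivial eigenvalues of $U$ have the form $e^{\pm i\phi}$ with $\cos\phi = \lambda/2$ for $\lambda$ an eigenvalue of $H_\eta(G)$; any remaining eigenvalues are $\pm 1$ coming from the kernel part of the underlying isometry, and these are already roots of unity and so do not affect the period. Since $H_\eta$-cospectrality preserves this set of $\lambda$'s, Theorem~\ref{Main1} lets us take $G = C_n^j$ without loss of generality.

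Next I would diagonalize $H_\eta(C_n^j)$ explicitly. After conjugation by a suitable diagonal unitary that absorbs the phases $e^{\pm i\eta \theta(a)}$ along the cycle, the matrix becomes a genuine circulant whose eigenvalues are
\[
\lambda_k = 2\cos\!\left(\frac{\eta j + 2\pi k}{n}\right), \qquad k = 0, 1, \dots, n-1;
\]
multiplying these values reproduces the determinant formula of Theorem~\ref{Main1}, which serves as a consistency check. Consequently the eigenvalues of $U$ coming from the spectral mapping are $e^{\pm i\phi_k}$ with $\phi_k = (\eta j + 2\pi k)/n$, and $U^\tau = I$ reduces to $\tau \phi_k \in 2\pi \mathbb{Z}$ for every $k$. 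Taking the difference of two such conditions for different $k$ shows that some $\tau > 0$ works if and only if $\eta \in \mathbb{Q}\pi$, establishing the equivalence in the theorem.

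To compute the period, write $\eta = (p/q)\pi$ with $\gcd(p,q) = 1$. The condition becomes $2qn \mid \tau(pj + 2qk)$ for every $k \in \{0, 1, \dots, n-1\}$, so the minimal positive $\tau$ equals $2qn/d$, where $d = \gcd\bigl(\gcd_k(pj+2qk),\, 2qn\bigr)$. Since $\{pj + 2qk\}_k$ is an arithmetic progression with common difference $2q$, its componentwise gcd is $\gcd(pj, 2q)$, which already divides $2q$ and hence $2qn$; thus $d = \gcd(pj, 2q)$. A brief parity analysis finishes the argument: when $p$ is odd, $\gcd(p, 2q) = 1$, so $\gcd(pj, 2q) = \gcd(j, 2q)$ and $\tau = 2qn/\gcd(j, 2q)$; when $p$ is even (so $q$ is odd), writing $p = 2p'$ with $\gcd(p', q) = 1$ gives $\gcd(pj, 2q) = 2\gcd(j, q)$, yielding $\tau = qn/\gcd(j, q)$. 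I expect the principal technical hurdle to be the first step — the explicit diagonalization of $H_\eta(C_n^j)$ and the careful verification that the extra $\pm 1$ eigenvalues contributed by the spectral mapping theorem do not disturb the period calculation — while the final gcd arithmetic, once organized by the parity of $p$, is routine.
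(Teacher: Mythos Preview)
Your proposal is correct and reaches the same final arithmetic, but the route differs from the paper's. After the common reduction to $C_n^j$ via $H_\eta$-cospectrality and the spectral mapping theorem, the paper does \emph{not} diagonalize $H_\eta(C_n^j)$ at all. Instead it argues dynamically: since every vertex of $C_n^j$ has degree $2$, Lemma~\ref{25} shows that $U_\theta$ simply pushes each basis vector $\BM{e}^{(a)}$ one step around the cycle while accumulating a phase, so that $U_\theta^{\,n}\BM{e}^{(a)}=e^{\pm i j\eta}\BM{e}^{(a)}$. From this one reads off immediately that the period is a multiple of $n$, say $\tau=ln$, and that the minimal $l$ is $\min\{l\in\MB{N}\mid pjl\in 2q\MB{Z}\}$; the parity split on $p$ then proceeds exactly as in your last paragraph.

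Your spectral route---explicitly computing $\lambda_k=2\cos\bigl((\eta j+2\pi k)/n\bigr)$ via a diagonal gauge that turns $H_\eta(C_n^j)$ into a genuine circulant---is equally valid and yields the eigenvalues of $U_\theta$ directly as $e^{\pm i(\eta j+2\pi k)/n}$. This has the bonus of giving the full spectrum of $H_\eta(C_n^j)$ (and reproduces the determinant in Theorem~\ref{Main1} as a check), whereas the paper never needs these eigenvalues. Conversely, the paper's dynamical argument is shorter and sidesteps the two issues you flag: no diagonalization is needed, and the question of extra $\pm 1$ eigenvalues never arises because one works with $U_\theta$ on basis vectors rather than with $\Spec(U_\theta)$. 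For the record, your handling of those extra eigenvalues is fine for cycles, since $\tfrac12|\MC{A}^\pm|-|V|=0$ forces $M_{\pm 1}=\dim\ker(\tilde H_\eta\mp I)$, so $\pm 1$ appears with positive multiplicity only when it is already among the $e^{\pm i\phi_k}$.

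One small wording issue: ``taking the difference of two such conditions'' gives $n\mid\tau$, not $\eta\in\MB{Q}\pi$; the rationality of $\eta$ comes from the $k=0$ condition $\tau\eta j\in 2\pi n\MB{Z}$ (for $j>0$). This is the same step the paper takes, so your argument goes through once phrased correctly.
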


This paper is organized as follows.
In Section~\ref{102},
we prepare terminologies on spectral graph theory.
The definitions of mixed graphs and their $\eta$-Hermitian adjacency matrices are provided.
In Section~\ref{11},
coefficients of the characteristic polynomials of $\eta$-Hermitian adjacency matrices are discussed.
We focus on permutations that contribute to values of determinants.
Relationship between the characteristic polynomials of a mixed graph and its underlying graph is clarified.
In Section~\ref{104},
we carry out classification of mixed cycles by $\eta$-Hermitian adjacency matrices.
We introduce $n+1$ types of mixed cycles
and show that every mixed cycle is switching equivalent to one of them.
On the other hand, we show that the $n+1$ types of mixed cycles have different eigenvalues except for a finite number of $\eta$.
In Section~\ref{105},
we prepare quantum walks defined by mixed graphs.
We define periodicity of mixed graphs and provide some characterizations of it.
In Section~\ref{106},
we discuss periodicity of mixed paths.
We observe action of time evolution matrices on the unit vectors and provide a visual proof.
In Section~\ref{107},
we discuss periodicity of mixed cycles.
It is easy to provide a necessary and sufficient condition for a mixed cycle to be periodic,
but determination of the period is a bit complicated.

\section{Preliminaries on spectral graph theory} \label{102}

Let $\G =(V, E)$ be a finite simple and connected graph with the vertex set $V$ and the edge set $E$.
For $x \in V$,
the set of neighbors of $x$ is denoted by $N(x)$.
Define $\MC{A} = \MC{A}(\G)=\{ (x, y), (y, x) \mid xy \in E(\G) \}$,
which is the set of the symmetric arcs of $\G$.
The origin and terminus of $a=(x, y) \in \MC{A}$ are denoted by $o(a), t(a)$, respectively.
We express $a^{-1}$ as the inverse arc of $a$.

A {\it mixed graph} $G$ consists of a finite set $V(G)$ of vertices
together with a subset $\MC{A}(G) \subset V(G) \times V(G) \setminus \{ (x,x) \mid x \in V \}$
of ordered pairs called {\it arcs}.
Let $G$ be a mixed graph.
Define $\MC{A}^{-1}(G) = \{ a^{-1} \mid a \in \MC{A}(G) \}$
and $\MC{A}^{\pm}(G) = \MC{A}(G) \cup \MC{A}^{-1}(G)$.
If there is no danger of confusion, we write $\MC{A}(G)$ as $\MC{A}$ simply.
If $(x,y) \in \MC{A} \cap \MC{A}^{-1}$,
we say that the unordered pair $\{x,y\}$ is a {\it digon} of $G$.
For a vertex $x \in V(G)$,
define $\deg_{G} x = \deg_{G^{\pm}} x$.
A mixed graph $G$ is $k$-regular if $\deg_{G}x = k$ for any vertex $x \in V(G)$.
The graph $G^{\pm} = (V(G), \MC{A}^{\pm})$ is
so-called the {\it underlying graph} of a mixed graph $G$,
and this is regarded as an undirected graph depending on context.
On the other hand,
we equate an undirected graph with a mixed graph
by considering undirected edges $xy$ as bidirectional arcs $(x,y), (y,x)$.
Throughout this paper, we assume that mixed graphs are weakly connected,
i.e.,
we assume that $G^{\pm}$ is connected.


Let $G = (V, \MC{A})$ be a mixed graph.
For $\eta \in [0, 2\pi)$, 
the {\it $\eta$-Hermitian adjacency matrix} $H_{\eta} = H_{\eta}(G) \in \MB{C}^{V \times V}$ is defined by
\[
(H_{\eta})_{x,y} = \begin{cases}
1 \qquad &\text{if $(x,y) \in \MC{A} \cap \MC{A}^{-1}$,} \\
e^{\eta i} \qquad &\text{if $(x,y) \in \MC{A} \setminus \MC{A}^{-1}$,} \\
e^{-\eta i} \qquad &\text{if $(x,y) \in \MC{A}^{-1} \setminus \MC{A}$,} \\
0 \qquad &\text{otherwise.}
\end{cases}
\]
When $\eta = \frac{\pi}{2}$,
the matrix $H_{\frac{\pi}{2}}$ is nothing but the Hermitian adjacency matrix.
This is introduced by Guo--Mohar \cite{GM} and Li--Liu \cite{LL}, independently.
When $\eta = \frac{\pi}{3}$,
the matrix $H_{\frac{\pi}{3}}$ is called the Hermitian adjacency matrix of the second kind.
This is introduced by Mohar \cite{M}.
We refer to \cite{AAS, GS, LY} as recent studies on Hermitian adjacency matrices.
Note that $H_{\eta}(G^{\pm})$ coincides with the ordinary adjacency matrix of $G^{\pm}$.
Define the {\it degree matrix} $D = D(G) \in \MB{C}^{V \times V}$ by $D_{x,y} = (\deg_{G}x)\delta_{x,y}$
for vertices $x,y \in V(G)$,
where $\delta_{x,y}$ is the Kronecker delta.
For $\eta \in [0, 2\pi)$,
the {\it normalized $\eta$-Hermitian adjacency matrix} $\NH_{\eta}$ is defined by
\[ \NH_{\eta} = D^{-\frac12} H_{\eta} D^{-\frac12}. \]
Note that if a mixed graph $G$ is $k$-regular, we have $\NH_{\eta} = \frac{1}{k} H_{\eta}$.

Let $G$ be a mixed graph.
The list of the eigenvalues of $H_{\eta}(G)$ together with their multiplicities,
denoted by $\Spec(H_{\eta}(G))$, is called {\it $H_{\eta}$-spectrum} of $G$.
We say that mixed graphs $G$ and $G'$ are {\it $H_{\eta}$-cospectral} if they have the same $H_{\eta}$-spectrum.
The same is on $\tilde{H}_{\eta}$.

\section{Permutations and characteristic polynomials} \label{11}
Let $\G$ be an undirected graph.
A mixed graph $G$ is said to be a {\it mixed $\G$} if $G^{\pm}$ is isomorphic to $\G$.
Similarly, we say that $G$ is a {\it mixed tree} if $G^{\pm}$ is a tree.
Let $G = (V, \MC{A})$ be a mixed graph, and let $x_1,x_2, \dots, x_l \in V$.
We say that a sequence $C = (x_1,x_2, \dots, x_l)$ is an {\it $l$-cycle} in $G$
if $C$ is an $l$-cycle in $G^{\pm}$.
The {\it girth} of $G$ is defined by the girth of $ G^{\pm}$.
Note that if the girth of $G$ is $s+1$, then it has no $l$-cycle for $l \in \{1,2, \dots, s\}$.

Let $G = (V,\MC{A})$ be a mixed graph.
Put $X = \lambda I - H_{\eta}(G)$.
Then
\[ \det X = \sum_{\sigma \in \Sym(V)} \sgn(\sigma) \prod_{x \in V} X_{x, \sigma(x)}, \]
where $\Sym(V)$ is the set of all permutations of $V$.
Let
\[
\MC{P}(X) := \left\{ \sigma \in \Sym(V) \, \middle | \, \prod_{x \in V} X_{x, \sigma(x)} \neq 0 \right\}.
\]
In addition, we define
\[
\MC{P}_m(X) := \Big\{ \sigma \in \MC{P}(X) \, \Big | \, |\{ x \in V \mid \sigma(x) \neq x \} | = m \Big\}
\]
for $m \in \MB{N}$.
Any permutation $\sigma \in \Sym(V)$ can be expressed as the product of disjoint cyclic permutations,
say $\sigma = \sigma_1 \sigma_2 \cdots \sigma_l$ for some $l \in \MB{N}$.
We call each $\sigma_i$ a {\it factor} of $\sigma$.

\begin{lem} \label{00}
{\it 
Let $G = (V, \MC{A})$ be a mixed graph with girth $s+1$,
and let $X = \lambda I - H_{\eta}(G)$.
For $l \in \{1,2, \dots, s\}$,
all factors of $\sigma \in \MC{P}_{l}(X)$ are transpositions,
where $n = |V|$.
}
\end{lem}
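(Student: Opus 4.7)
The plan is to translate the nonvanishing condition on $\prod_{x\in V} X_{x,\sigma(x)}$ into a structural condition on $G^{\pm}$, and then exploit the girth hypothesis.

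First I would write any $\sigma \in \MC{P}_l(X)$ as a product of disjoint (non-trivial) cyclic factors, say $\sigma = \sigma_1 \sigma_2 \cdots \sigma_r$, where $\sigma_i$ is a $k_i$-cycle with $k_i \ge 2$. Since $|\{x : \sigma(x) \ne x\}| = l$, these factors account for exactly $l$ vertices, so
\[
k_1 + k_2 + \cdots + k_r = l.
\]
In particular $k_i \le l \le s$ for every $i$.

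Next, I would observe that for each $x$ with $\sigma(x) \ne x$, the factor $X_{x,\sigma(x)} = -H_{\eta}(G)_{x,\sigma(x)}$ is nonzero precisely when $(x,\sigma(x)) \in \MC{A}^{\pm}$. Applied to a factor $\sigma_i = (y_1, y_2, \dots, y_{k_i})$, this means $(y_1,y_2), (y_2,y_3), \dots, (y_{k_i-1},y_{k_i}), (y_{k_i},y_1)$ all lie in $\MC{A}^{\pm}$. If $k_i \ge 3$, the $y_j$ being pairwise distinct forces $(y_1, y_2, \dots, y_{k_i})$ to be a genuine $k_i$-cycle in $G^{\pm}$.

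Finally, I would close with the girth argument: by hypothesis $G$ (equivalently, $G^{\pm}$) has no $k$-cycle for $k \le s$, so $k_i \ge s+1$ whenever $k_i \ge 3$. But we already showed $k_i \le s$, which rules out $k_i \ge 3$. Hence every factor $\sigma_i$ must have length exactly $2$, that is, $\sigma$ is a product of transpositions.

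The only real step is the girth-count matchup; there is no genuine obstacle. A minor pitfall I would be careful about is the convention on cycle decompositions---fixed points are $1$-cycles rather than transpositions, so the statement should be read as referring to the non-trivial factors (which is consistent with the way $\MC{P}_l(X)$ is defined via the number of moved vertices).
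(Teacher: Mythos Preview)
Your proposal is correct and follows essentially the same argument as the paper: both observe that a cyclic factor of length $t\ge 3$ in $\sigma\in\MC{P}_l(X)$ would force a $t$-cycle in $G^{\pm}$ with $t\le l\le s$, contradicting the girth hypothesis. Your version simply spells out the bound $k_i\le l$ via $\sum k_i=l$ a bit more explicitly than the paper does.
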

\begin{proof}
Let $H_{\eta} = H_{\eta}(G)$.
Suppose $\sigma \in \MC{P}_{l}(X)$ has a factor $\sigma_i$ of length $t > 2$.
Display as $\sigma_i = (x_1x_2 \cdots x_t)$.
Then $(H_{\eta})_{x_1, x_2} (H_{\eta})_{x_2, x_3} \cdots (H_{\eta})_{x_t, x_1} \neq 0$.
However, $G$ has no $t$-cycles since $t \leq l \leq s$.
Thus, at least one of $(H_{\eta})_{x_1, x_2}, (H_{\eta})_{x_2, x_3}, \dots, (H_{\eta})_{x_t, x_1}$ is $0$.
This is a contradiction.
\end{proof}

On the other hand,
for distinct vertices $x,y$ in a mixed graph $G$,
we have
\begin{equation} \label{01}
(H_{\eta})_{x,y} (H_{\eta})_{y,x} = |(H_{\eta})_{x,y}|^2 =
\begin{cases}
1 \qquad &\text{if $x$ is adjacent to $y$ in $G^{\pm}$,} \\
0 \qquad &\text{otherwise,}
\end{cases}
\end{equation}
since $H_{\eta}$ is Hermitian.
Remarking this, we have the following.

\begin{pro} \label{10}
{\it
Let $G = (V, \MC{A})$ be a mixed graph with girth $s+1$.
Let
\begin{align*}
\det (\lambda I - H_{\eta}(G)) &= \lambda^n + a_1 \lambda^{n-1} + \cdots + a_{n-1} \lambda + a_n, \\
\det (\lambda I - H_{\eta}(G^{\pm})) &= \lambda^n + b_1 \lambda^{n-1} + \cdots + b_{n-1} \lambda + b_n,
\end{align*}
where $n = |V|$.
Then we have $a_l = b_l$ for any $l \in \{1,2, \dots, s\}$.
}
\end{pro}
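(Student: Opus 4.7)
The plan is to expand both determinants using the Leibniz formula and compare contributions degree by degree. Writing $X = \lambda I - H_{\eta}(G)$ and $Y = \lambda I - H_{\eta}(G^{\pm})$, the coefficient of $\lambda^{n-l}$ in each characteristic polynomial is obtained by selecting permutations $\sigma$ that disagree with the identity on exactly $l$ vertices, so
\[
a_l = \sum_{\sigma \in \MC{P}_l(X)} \sgn(\sigma) \prod_{x : \sigma(x) \neq x} (- (H_{\eta}(G))_{x,\sigma(x)}),
\]
and analogously for $b_l$ with $H_{\eta}(G^{\pm})$ in place of $H_{\eta}(G)$. Note the diagonal entries of both Hermitian adjacency matrices vanish, so the $\lambda^{n-l}$ part of each product comes exclusively from the $l$ non-fixed points of $\sigma$.

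The next step is to apply Lemma~\ref{00}: for $l \in \{1,2,\dots,s\}$, every $\sigma \in \MC{P}_l(X)$ decomposes as a product of $l/2$ disjoint transpositions (in particular, $\MC{P}_l(X)$ is empty when $l$ is odd, contributing $a_l = 0$, and the same reasoning applies to $\MC{P}_l(Y)$ since $G$ and $G^{\pm}$ have the same girth). For an even $l$ and such a $\sigma$ with transposition factors $(x_1, y_1),\dots,(x_{l/2}, y_{l/2})$, the contribution of $\sigma$ to $a_l$ is
\[
\sgn(\sigma) \prod_{i=1}^{l/2} (H_{\eta}(G))_{x_i, y_i} (H_{\eta}(G))_{y_i, x_i}.
\]
By equation~\eqref{01}, each factor $(H_{\eta}(G))_{x_i,y_i}(H_{\eta}(G))_{y_i,x_i}$ equals $1$ when $x_i y_i$ is an edge of $G^{\pm}$ and $0$ otherwise. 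That is exactly the same quantity one obtains by computing the corresponding product with the ordinary adjacency matrix $H_{\eta}(G^{\pm})$.

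Therefore the summands indexed by $\sigma \in \MC{P}_l(X)$ and by $\sigma \in \MC{P}_l(Y)$ coincide term by term, and moreover $\MC{P}_l(X) = \MC{P}_l(Y)$ because nonzero entries of $H_{\eta}(G)$ and of $H_{\eta}(G^{\pm})$ occupy the same positions. Summing over all such permutations yields $a_l = b_l$ for every $l \in \{1,2,\dots,s\}$. The essential point, and the only subtle one, is the use of Lemma~\ref{00} to forbid longer cyclic factors whose contributions would generally involve nontrivial phases $e^{\pm \eta i}$; since we are below the girth, such cycles cannot appear, and all surviving products reduce to $|(H_{\eta})_{x,y}|^2 \in \{0,1\}$, which is oblivious to the orientation.
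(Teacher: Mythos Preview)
Your proof is correct and follows essentially the same approach as the paper's: both set $X=\lambda I - H_\eta(G)$ and $Y=\lambda I - H_\eta(G^{\pm})$, observe that $\MC{P}_l(X)=\MC{P}_l(Y)$ since the supports of the two matrices coincide, invoke Lemma~\ref{00} to reduce each contributing permutation to a product of transpositions, and then use equation~\eqref{01} to see that every such contribution equals the corresponding one for $G^{\pm}$. Your write-up is slightly more explicit about why the coefficient of $\lambda^{n-l}$ is carried precisely by $\MC{P}_l$, but the argument is the same.
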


\begin{proof}
Put $X = \lambda I - H_{\eta}(G)$ and $Y = \lambda I - H_{\eta}(G^{\pm})$.
For $x, y \in V$, $X_{x,y} \neq 0$ if and only if $Y_{x,y} \neq 0$,
so $\MC{P}(X) = \MC{P}(Y)$.
In particular, $\MC{P}_{l}(X) = \MC{P}_{l}(Y)$.
If $\MC{P}_{l}(X) = \emptyset$, we have $a_l = b_l = 0$.
We consider $\MC{P}_{l}(X) \neq \emptyset$.
Let $\sigma \in \MC{P}_{l}(X)$.
By Lemma~\ref{00},
all factors of $\sigma$ are transpositions.
Display as $\sigma = (x_1 y_1)(x_2 y_2) \cdots (x_t y_t)$,
where $t = l / 2$.
By (\ref{01}),
\begin{align*}
\sgn(\sigma) \prod_{x \in V} X_{x, \sigma(x)}
&= \lambda^{n-2t} (H_{\eta}(G))_{x_1, y_1} (H_{\eta}(G))_{y_1, x_1}  \cdots (H_{\eta}(G))_{x_t, y_t} (H_{\eta}(G))_{y_t, x_t} \\
&= \lambda^{n-2t} \cdot 1 \\
&= \lambda^{n-2t} (H_{\eta}(G^{\pm}))_{x_1, y_1} (H_{\eta}(G^{\pm}))_{y_1, x_1}  \cdots (H_{\eta}(G^{\pm}))_{x_t, y_t} (H_{\eta}(G^{\pm}))_{y_t, x_t} \\
&= \sgn(\sigma) \prod_{x \in V} Y_{x, \sigma(x)}.
\end{align*}
Therefore,
\[ a_l 
= \sum_{\sigma \in \MC{P}_{l}(X)} \sgn(\sigma) \prod_{x \in V} X_{x, \sigma(x)}
= \sum_{\sigma \in \MC{P}_{l}(Y)} \sgn(\sigma) \prod_{x \in V} Y_{x, \sigma(x)}
= b_l. \]
\end{proof}

\begin{cor} \label{60}
{\it
Let $G$ be a mixed tree.
Then
\[ \det (\lambda I - H_{\eta}(G)) = \det (\lambda I - H_{\eta}(G^{\pm})), \]
i.e.,
$G$ and $G^{\pm}$ are $H_{\eta}$-cospectral.
}
\end{cor}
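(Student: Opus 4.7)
The plan is to apply Proposition~\ref{10} directly, taking advantage of the fact that a tree contains no cycles at all. Writing
\[
\det(\lambda I - H_{\eta}(G)) = \lambda^n + \sum_{l=1}^n a_l \lambda^{n-l},
\qquad
\det(\lambda I - H_{\eta}(G^{\pm})) = \lambda^n + \sum_{l=1}^n b_l \lambda^{n-l},
\]
with $n = |V|$, it suffices to verify that $a_l = b_l$ for every $l \in \{1, 2, \ldots, n\}$.

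Since $G$ is a mixed tree, $G^{\pm}$ has no cycle of any length. Inspecting the proofs of Lemma~\ref{00} and Proposition~\ref{10}, the girth hypothesis is only used to forbid cycles of length $t \leq l$ in $G^{\pm}$ when a permutation factor of length $t > 2$ would arise; in the tree case that non-existence is automatic for every $t$. Thus one may invoke Proposition~\ref{10} with the role of $s$ played by $n$ itself, obtaining $a_l = b_l$ for all $l \in \{1, \ldots, n\}$. Equality of the characteristic polynomials follows, which is exactly the asserted $H_{\eta}$-cospectrality.

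There is essentially no obstacle to overcome: the corollary is a one-line consequence of Proposition~\ref{10}, once one reads the assumption "girth $s+1$" in that proposition as "$G^{\pm}$ has no cycle of length at most $s$," a condition that is vacuously satisfied by a tree for every $s$. If desired, one can make this fully explicit by rerunning the short argument of Lemma~\ref{00}: any cyclic factor $(x_1 x_2 \cdots x_t)$ of a permutation $\sigma \in \MC{P}_l(\lambda I - H_{\eta}(G))$ with $t > 2$ would force $(H_\eta)_{x_1,x_2}(H_\eta)_{x_2,x_3}\cdots(H_\eta)_{x_t,x_1} \neq 0$, hence produce a $t$-cycle in the tree $G^{\pm}$, a contradiction. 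So only transposition factors can appear, and the identity \eqref{01} shows that each such factor contributes the same scalar $1$ to $\det(\lambda I - H_{\eta}(G))$ as to $\det(\lambda I - H_{\eta}(G^{\pm}))$, finishing the argument.
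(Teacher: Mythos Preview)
Your proof is correct and follows essentially the same route as the paper: the paper simply observes that a mixed tree has girth $\infty$ and invokes Proposition~\ref{10} to conclude that all coefficients agree. Your write-up elaborates slightly on why the girth hypothesis is vacuously satisfied and optionally reruns Lemma~\ref{00}, but the underlying argument is identical.
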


\begin{proof}
Since $G$ is a mixed tree, the girth is $\infty$.
By Proposition~\ref{10},
all coefficients of both characteristic polynomials are equal.
\end{proof}

For distinct vertices $x,y$ in a mixed graph $G$,
we also have
\[
(\NH_{\eta})_{x,y} (\NH_{\eta})_{y,x} = |(\NH_{\eta})_{x,y}|^2 =
\begin{cases}
\frac{1}{\deg x \deg y} \qquad &\text{if $x$ is adjacent to $y$ in $G^{\pm}$,} \\
0 \qquad &\text{otherwise.}
\end{cases}
\]
Therefore, the same result as Proposition~\ref{10} holds for $\NH_{\eta}$.

\begin{pro} \label{12}
{\it
Let $G = (V, \MC{A})$ be a mixed graph with girth $s+1$.
Let
\begin{align*}
\det (\lambda I - \NH_{\eta}(G)) &= \lambda^n + a_1 \lambda^{n-1} + \cdots + a_{n-1} \lambda + a_n, \\
\det (\lambda I - \NH_{\eta}(G^{\pm})) &= \lambda^n + b_1 \lambda^{n-1} + \cdots + b_{n-1} \lambda + b_n,
\end{align*}
where $n = |V|$.
Then we have $a_l = b_l$ for any $l \in \{1,2, \dots, s\}$.
In particular,
if $G$ is a mixed tree, then
\[ \det (\lambda I - \NH_{\eta}(G)) = \det (\lambda I - \NH_{\eta}(G^{\pm})), \]
i.e.,
$G$ and $G^{\pm}$ are $\NH_{\eta}$-cospectral.
}
\end{pro}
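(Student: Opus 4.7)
The plan is to imitate the proof of Proposition~\ref{10} essentially verbatim, with the role of the identity $|(H_\eta)_{x,y}|^2 \in \{0,1\}$ replaced by the displayed identity
\[
(\NH_{\eta})_{x,y}(\NH_{\eta})_{y,x} = \begin{cases} \frac{1}{\deg x \deg y} & \text{if $x \sim y$ in $G^{\pm}$,} \\ 0 & \text{otherwise,} \end{cases}
\]
which the authors have just recorded just above the statement. The crucial observation is that the right-hand side depends only on the underlying graph $G^{\pm}$ and not on the orientations in $\MC{A}$, so the argument will go through once Lemma~\ref{00} is adapted.

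First I would put $X = \lambda I - \NH_{\eta}(G)$ and $Y = \lambda I - \NH_{\eta}(G^{\pm})$ and note that $X_{x,y} \neq 0 \iff Y_{x,y} \neq 0$, so $\MC{P}(X) = \MC{P}(Y)$ and hence $\MC{P}_l(X) = \MC{P}_l(Y)$ for every $l$. Next I would check that the statement of Lemma~\ref{00} remains valid when $H_\eta$ is replaced by $\NH_\eta$; this is immediate because the proof there only uses that $(H_\eta)_{x,y} \neq 0$ implies $x$ is adjacent to $y$ in $G^\pm$, which is equally true for $\NH_\eta$. Consequently, for $l \in \{1,2,\dots,s\}$ every $\sigma \in \MC{P}_l(X)$ is a product of transpositions $\sigma = (x_1y_1)(x_2y_2)\cdots(x_ty_t)$ with $2t = l$.

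Now for such a $\sigma$, using the boxed identity,
\begin{align*}
\sgn(\sigma)\prod_{x \in V} X_{x,\sigma(x)}
&= \lambda^{n-2t} \prod_{i=1}^t (\NH_{\eta}(G))_{x_i,y_i}(\NH_{\eta}(G))_{y_i,x_i} \\
&= \lambda^{n-2t} \prod_{i=1}^t \frac{1}{\deg_{G} x_i \deg_{G} y_i}.
\end{align*}
Since $G$ and $G^{\pm}$ have identical vertex degrees (by definition $\deg_{G} x = \deg_{G^{\pm}} x$), the same computation applied to $Y$ yields exactly the same value, hence
\[
\sgn(\sigma)\prod_{x \in V} X_{x,\sigma(x)} = \sgn(\sigma)\prod_{x \in V} Y_{x,\sigma(x)}.
\]
Summing over $\sigma \in \MC{P}_l(X) = \MC{P}_l(Y)$ gives $a_l = b_l$ for $l \in \{1,\dots,s\}$, as required. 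The ``in particular'' clause for mixed trees is then immediate because a tree has infinite girth, so all coefficients coincide.

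I do not anticipate any genuine obstacle: the proof is essentially a transcription. The only point requiring a moment of care is ensuring that the common factor $\prod 1/(\deg x_i \deg y_i)$ really is intrinsic to the underlying graph, which is nothing more than the observation that orientation does not change vertex degrees in this setup.
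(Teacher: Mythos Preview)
Your proposal is correct and follows exactly the approach the paper intends: the paper does not give a separate proof of Proposition~\ref{12} but simply records the identity $(\NH_{\eta})_{x,y}(\NH_{\eta})_{y,x}=\frac{1}{\deg x\deg y}$ and remarks that ``the same result as Proposition~\ref{10} holds for $\NH_{\eta}$,'' which is precisely the transcription you carry out. Your explicit note that Lemma~\ref{00} carries over verbatim (since it only uses the zero pattern) and that degrees are intrinsic to $G^{\pm}$ fills in the only details the paper leaves implicit.
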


\section{Classification of mixed cycles by $H_{\eta}$-spectra} \label{104}

The relation that two mixed graphs are $H_{\eta}$-cospectral is an equivalence relation.
We call its equivalence class {\it $H_{\eta}$-cospectral class}.
In this section,
we determine the equivalence classes in the mixed cycles.

Let $G = (V, \MC{A})$ be a mixed graph.
A function $\alpha : V \to \{ 1, e^{\pm i \eta} \}$ is called a {\it switching function}.
For a switching function $\alpha$, 
we define the matrix $D(\alpha) \in \MB{C}^{V \times V}$ by $D(\alpha)_{x,y} = \alpha(x) \delta_{x,y}$.
Taking a switching function $\alpha$ well,
the matrix $D(\alpha) H_{\eta}(G) D(\alpha)^*$ is the $\eta$-Hermitian adjacency matrix of another mixed graph $G'$.
Then, we say that {\it $G'$ is obtained by switching with respect to $\alpha$ from $G$}.
Clearly,
$G$ and $G'$ are $H_{\eta}$-cospectral.
Note that if a mixed graph $G'$ is obtained by switching with respect to $\alpha$ from a mixed graph $G$,
we also say that $G$ and $G'$ are {\it switching equivalent}.
Recent studies related to switching equivalence of mixed graphs are in \cite{KB, WY}.

\subsection{Switching functions}
In \cite{AGNN},
Akbari et al~defined the four typical switching functions
and determined the $H_{\frac{\pi}{2}}$-cospectral classes in the mixed cycles.
We generalize their result to general $\eta \in [0, 2\pi)$.
Let $G = (V, \MC{A})$ be a mixed cycle.
First,
we define the three typical switching functions as follows:

\begin{itemize}
\item[Sw.2$'$.] For a vertex $x \in V$, define the switching function
\[ \alpha(v) = \begin{cases}
e^{i \eta} \qquad &\text{if $v = x$,} \\
1 \qquad &\text{otherwise}.
\end{cases} \]
Let $N_{G^{\pm}}(x) = \{v_1, v_2\}$.
If $(v_1, x), (v_2, x) \in \MC{A} \setminus \MC{A}^{-1}$,
then we have the mixed graph $(V, \MC{A} \cup \{ (v_1, x)^{-1}, (v_2, x)^{-1} \})$ by switching.

\begin{figure}[h]
\begin{center}
\begin{tikzpicture}
[scale = 0.5,
fat/.style={circle,fill=black, inner sep = 2mm},
slim/.style={circle,fill=black, inner sep = 0.8mm},
]
\node[slim] (s1) at (-2,2) [label = below:$v_1$] {};
\node[slim] (s2) at (0,3) [label = above:$x$] {};
\node[slim] (s3) at (2,2) [label = below:$v_2$] {};
\draw[line width = 1pt][->] (s1) to  (s2);
\draw[line width = 1pt][->] (s3) to  (s2);
\end{tikzpicture} \raisebox{10mm}{$\xrightarrow{\text{Sw.2$'$ on $x$}}$}
\begin{tikzpicture}
[scale = 0.5,
fat/.style={circle,fill=black, inner sep = 2mm},
slim/.style={circle,fill=black, inner sep = 0.8mm},
]
\node[slim] (s1) at (-2,2) [label = below:$v_1$] {};
\node[slim] (s2) at (0,3) [label = above:$x$] {};
\node[slim] (s3) at (2,2) [label = below:$v_2$] {};
\draw[line width = 1pt] (s1) -- (s2) -- (s3);
\end{tikzpicture}
\caption{Sw.2$'$ on the vertex $x$} \label{sw2}
\end{center}
\end{figure}
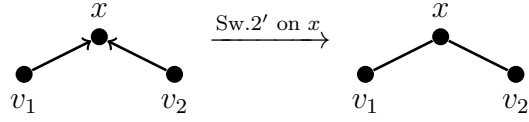

\item[Sw.3$'$.] For a vertex $x \in V$, define the switching function
\[ \alpha(v) = \begin{cases}
e^{-i \eta} \qquad &\text{if $v = x$,} \\
1 \qquad &\text{otherwise}.
\end{cases} \]
Let $N_{G^{\pm}}(x) = \{v_1, v_2\}$.
If $(x, v_1), (x, v_2) \in \MC{A} \setminus \MC{A}^{-1}$,
then we have the mixed graph $(V, \MC{A} \cup \{ (x, v_1)^{-1}, (x, v_2)^{-1} \})$ by switching.

\begin{figure}[h]
\begin{center}
\begin{tikzpicture}
[scale = 0.5,
fat/.style={circle,fill=black, inner sep = 2mm},
slim/.style={circle,fill=black, inner sep = 0.8mm},
]
\node[slim] (s1) at (-2,1) [label = below:$v_1$] {};
\node[slim] (s2) at (0,2) [label = above:$x$] {};
\node[slim] (s3) at (2,1) [label = below:$v_2$] {};
\draw[line width = 1pt][->] (s2) to  (s1);
\draw[line width = 1pt][->] (s2) to  (s3);
\end{tikzpicture} \raisebox{10mm}{$\xrightarrow{\text{Sw.3$'$ on $x$}}$}
\begin{tikzpicture}
[scale = 0.5,
fat/.style={circle,fill=black, inner sep = 2mm},
slim/.style={circle,fill=black, inner sep = 0.8mm},
]
\node[slim] (s1) at (-2,1) [label = below:$v_1$] {};
\node[slim] (s2) at (0,2) [label = above:$x$] {};
\node[slim] (s3) at (2,1) [label = below:$v_2$] {};
\draw[line width = 1pt] (s1) -- (s2) -- (s3);
\end{tikzpicture}
\caption{Sw.3$'$ on the vertex $x$} \label{sw3}
\end{center}
\end{figure}

\item[Sw.4$'$.] For a vertex $x \in V$, define the switching function
\[ \alpha(v) = \begin{cases}
e^{i \eta} \qquad &\text{if $v = x$,} \\
1 \qquad &\text{otherwise}.
\end{cases} \]
Let $N_{G^{\pm}}(x) = \{v_1, v_2\}$.
If $(v_1, x) \in \MC{A} \setminus \MC{A}^{-1}$ and $(x, v_2) \in \MC{A} \cap \MC{A}^{-1}$,
then we have the mixed graph $(V, (\MC{A} \setminus \{(v_2, x)\}) \cup \{ (v_1, x)^{-1} \})$ by switching.

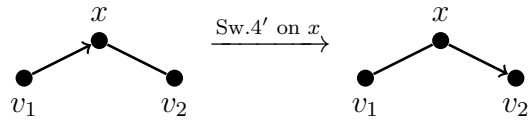
\begin{figure}[h]
\begin{center}
\begin{tikzpicture}
[scale = 0.5,
fat/.style={circle,fill=black, inner sep = 2mm},
slim/.style={circle,fill=black, inner sep = 0.8mm},
]
\node[slim] (s1) at (-2,1) [label = below:$v_1$] {};
\node[slim] (s2) at (0,2) [label = above:$x$] {};
\node[slim] (s3) at (2,1) [label = below:$v_2$] {};
\draw[line width = 1pt][->] (s1) to  (s2);
\draw[line width = 1pt] (s2) -- (s3);
\end{tikzpicture} \raisebox{10mm}{$\xrightarrow{\text{Sw.4$'$ on $x$}}$}
\begin{tikzpicture}
[scale = 0.5,
fat/.style={circle,fill=black, inner sep = 2mm},
slim/.style={circle,fill=black, inner sep = 0.8mm},
]
\node[slim] (s1) at (-2,1) [label = below:$v_1$] {};
\node[slim] (s2) at (0,2) [label = above:$x$] {};
\node[slim] (s3) at (2,1) [label = below:$v_2$] {};
\draw[line width = 1pt] (s1) -- (s2);
\draw[line width = 1pt][->] (s2) to  (s3);
\end{tikzpicture}
\caption{Sw.4$'$ on the vertex $x$} \label{sw4}
\end{center}
\end{figure}
\end{itemize}

See also Figures~\ref{sw2}, \ref{sw3} and \ref{sw4}.
The above switching functions are named after \cite{AGNN}.
The lack of ``Sw.1$'$" is due to the generalization of $\eta$.

\subsection{The $n+1$ types of mixed cycles}
Let $n \in \MB{N}$ and let $j \in \{0,1, \dots, n\}$.
We define the {\it mixed cycle $C_n^{j}$ of type $j$} by
\begin{align*}
V(C_n^{j}) &= \{ x_1, x_2, \dots, x_n \}, \\
\MC{A}(C_n^{j}) &= \{ (x_1, x_2), \dots, (x_j, x_{j+1}) \} \cup \{ (x_{j+1}, x_{j+2}), \dots, (x_n, x_{n+1}) \}^{\pm},
\end{align*}
where we set $x_{n+1} = x_1$.
We provide the mixed cycles $C_8^3$ and $C_8^8$ in Figure~\ref{31a} as examples.

\begin{figure}[h]
\begin{center}
\begin{tikzpicture}
[scale = 0.5,
slim/.style={circle,fill=black, inner sep = 0.8mm},
]
\node[slim] (v1) at (0,2) [label = above:$x_1$] {};
\node[slim] (v2) at (1.414,1.414) [label =above right:$x_2$] {};
\node[slim] (v3) at (2,0) [label = right:$x_3$] {};
\node[slim] (v4) at (1.414,-1.414) [label =below right:$x_4$] {};
\node[slim] (v5) at (0,-2) [label = below:$x_5$] {};
\node[slim] (v6) at (-1.414,-1.414) [label = below left:$x_6$] {};
\node[slim] (v7) at (-2,0) [label = left:$x_7$] {};
\node[slim] (v8) at (-1.414,1.414) [label = above left:$x_8$] {};
\draw[line width = 1pt][->] (v2) to  (v3);
\draw[line width = 1pt] (v4) to  (v5);
\draw[line width = 1pt] (v6) to  (v5);
\draw[line width = 1pt] (v7) to  (v6);
\draw[line width = 1pt] (v7) to  (v8);
\draw[line width = 1pt] (v8) to  (v1);
\draw[line width = 1pt][->] (v3) to  (v4);
\draw[line width = 1pt][->] (v1) to (v2);
\end{tikzpicture}
$\qquad$
\begin{tikzpicture}
[scale = 0.5,
slim/.style={circle,fill=black, inner sep = 0.8mm},
]
\node[slim] (v1) at (0,2) [label = above:$x_1$] {};
\node[slim] (v2) at (1.414,1.414) [label =above right:$x_2$] {};
\node[slim] (v3) at (2,0) [label = right:$x_3$] {};
\node[slim] (v4) at (1.414,-1.414) [label =below right:$x_4$] {};
\node[slim] (v5) at (0,-2) [label = below:$x_5$] {};
\node[slim] (v6) at (-1.414,-1.414) [label = below left:$x_6$] {};
\node[slim] (v7) at (-2,0) [label = left:$x_7$] {};
\node[slim] (v8) at (-1.414,1.414) [label = above left:$x_8$] {};
\draw[line width = 1pt][->] (v2) to  (v3);
\draw[line width = 1pt][->] (v4) to  (v5);
\draw[line width = 1pt][->] (v5) to  (v6);
\draw[line width = 1pt][->] (v6) to  (v7);
\draw[line width = 1pt][->] (v7) to  (v8);
\draw[line width = 1pt][->] (v8) to  (v1);
\draw[line width = 1pt][->] (v3) to  (v4);
\draw[line width = 1pt][->] (v1) to (v2);
\end{tikzpicture}
\caption{The mixed cycles $C_8^3$ and $C_8^8$} \label{31a}
\end{center}
\end{figure}
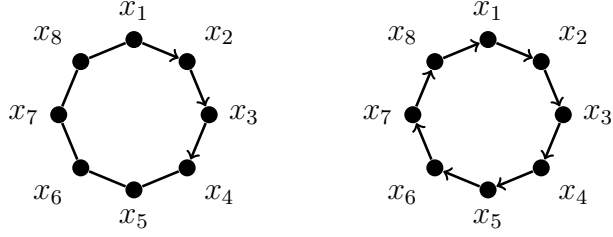

Note that $C_n^0$ is the undirected cycle of length $n$.
We will show that any mixed cycle is $H_{\eta}$-cospectral with some type of the mixed cycle,
and different types of mixed cycles have different spectra except for a finite number of $\eta$.

\begin{lem} \label{02}
{\it
Let $G = (V, \MC{A})$ be a mixed cycle of length $n$,
and let $x, v_1, v_2, \dots, v_l, y \in V$.
If $(x,v_1) \in \MC{A} \setminus \MC{A}^{-1}$,
$(v_1, v_2), (v_2, v_3), \dots, (v_{l-1}, v_l) \in \MC{A} \cap \MC{A}^{-1}$,
and $(v_l, y) \in \MC{A}^{-1} \setminus \MC{A}$.
Then, the mixed cycle $(V, \MC{A} \cup \{ (x,v_1)^{-1}, (v_l, y) \} )$ is obtained by switching from $G$.
}
\end{lem}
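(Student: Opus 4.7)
The plan is to realize the claim by a single switching function, rather than by composing the elementary operations Sw.$2'$, Sw.$3'$, and Sw.$4'$ introduced earlier. I would set $\alpha : V \to \{1, e^{\pm i \eta}\}$ by $\alpha(v_i) = e^{i \eta}$ for $1 \le i \le l$ and $\alpha(v) = 1$ for every other vertex, and then verify that $D(\alpha) H_{\eta}(G) D(\alpha)^*$ is exactly $H_{\eta}(G')$, where $G' = (V, \MC{A} \cup \{(x, v_1)^{-1}, (v_l, y)\})$.

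Since $\alpha$ is nontrivial only on $\{v_1, \dots, v_l\}$, the only entries of $H_{\eta}(G)$ that can change under conjugation by $D(\alpha)$ are those in the rows or columns indexed by these $l$ vertices; all other entries satisfy $\alpha(u) (H_{\eta})_{u,w} \overline{\alpha(w)} = (H_{\eta})_{u,w}$. Because $G$ is a cycle and $v_1, \dots, v_l$ are consecutive along it, the only arcs incident to this block are $(x, v_1)$, the internal arcs $(v_i, v_{i+1})$ for $1 \le i \le l-1$, and $(v_l, y)$ (together with their reverses). A short computation of $\alpha(u) (H_{\eta})_{u,w} \overline{\alpha(w)}$ on each gives, respectively, $1 \cdot e^{i \eta} \cdot e^{-i \eta} = 1$, then $e^{i \eta} \cdot 1 \cdot e^{-i \eta} = 1$, and finally $e^{i \eta} \cdot e^{-i \eta} \cdot 1 = 1$; the Hermitian-conjugate entries are forced by symmetry. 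Thus the directed arcs $(x, v_1)$ and $y \to v_l$ each become digons while the interior digons are preserved, which is precisely $H_{\eta}(G')$.

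There is no real obstacle here; the whole proof is the observation that the above choice of $\alpha$ matches the arithmetic of the entries of $H_{\eta}$, and in particular that the conjugated entries all lie in $\{0, 1, e^{\pm i \eta}\}$ so that the result really is the $\eta$-Hermitian adjacency matrix of a mixed graph. The only subtlety is noticing that one must take the value $e^{i \eta}$ (and not $e^{-i \eta}$) on the interior of the chain so that both boundary arcs are converted to digons simultaneously. An equivalent, more operational description is to apply Sw.$4'$ at $v_1, v_2, \dots, v_{l-1}$ in turn, propagating the directed arc $(x, v_1)$ along the chain of digons until two arrows both point into $v_l$, and then finish with Sw.$2'$ at $v_l$; this stepwise picture corresponds exactly to the single switching function $\alpha$ defined above.
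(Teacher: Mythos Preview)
Your proof is correct. The paper proceeds by induction on $l$: for $l=1$ one applies Sw.$2'$ at $v_1$, and for $l>1$ one applies Sw.$4'$ at $v_1$ to push the incoming arrow one step to the right, reducing to the case $l-1$. Unwinding this induction gives exactly the ``operational description'' you mention at the end (Sw.$4'$ at $v_1,\dots,v_{l-1}$, then Sw.$2'$ at $v_l$).

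Your primary argument---writing down the single switching function $\alpha$ that is $e^{i\eta}$ on $v_1,\dots,v_l$ and $1$ elsewhere, and checking the three boundary/interior entries directly---is a slightly different and arguably cleaner route: it replaces the induction by the observation that the composite of those $l$ elementary switchings is this one diagonal conjugation, and verifies the result in one shot. The paper's stepwise version has the minor advantage of staying within the named moves Sw.$2'$--Sw.$4'$, which makes the later pictorial arguments (e.g.\ Figure~\ref{30}) more transparent; your version has the advantage of making clear that a single switching suffices and that nothing special about $\eta$ is used beyond the group structure of $\{1,e^{\pm i\eta}\}$.
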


\begin{proof}
We prove by induction on $l$.
Consider $l = 1$.
Applying Sw.2$'$ with respect to $v_1$, we have the statement.
We suppose that the statement follows in the case of $l - 1$.
We apply Sw.4$'$ with respect to $v_1$.
The switched mixed cycle is in the situation of the case of $l-1$.
By the assumption of the induction, we have the statement.
\end{proof}

If $C^j_n=(V, \MC{A})$, then we let $(C^j_n )^{-1} =(V, \MC{A}^{-1} )$. 

\begin{pro} \label{03}
{\it
Let $G = (V, \MC{A})$ be a mixed cycle of length $n$.
Then, there exists $j \in \{0,1,\dots, n\}$ such that $G$ and $C_n^j$ is $H_{\eta}$-cospectral.
}
\end{pro}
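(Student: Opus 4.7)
The plan is to reduce an arbitrary mixed cycle to one of $C_n^0, C_n^1, \dots, C_n^n$ up to $H_{\eta}$-cospectrality by a sequence of local switching moves. Fix an orientation on the cycle, so every edge of the underlying graph is one of three types with respect to this orientation: a digon (type D), a forward arc (type F, meaning an arc in $\MC{A} \setminus \MC{A}^{-1}$ oriented along the cycle), or a backward arc (type B, oriented against it). Write $f$, $b$, $d$ for the respective counts. A useful auxiliary fact is that $G$ and its reverse $G^{-1}$ are always $H_{\eta}$-cospectral, since $H_{\eta}(G^{-1}) = \overline{H_{\eta}(G)}$ and both matrices are Hermitian with real spectrum.

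I would proceed by induction on $f+b$. The base case $f+b=0$ gives $G = C_n^0$. For the inductive step, if $f \geq 1$ and $b \geq 1$, observe that the cyclic sequence of non-digon arcs contains both types, hence there exist two non-digon arcs of opposite type that are consecutive in this reduced sequence; that is, only digons separate them along one arc of the cycle. If this pair appears as F, then a run of Ds, then B in the chosen orientation, Lemma~\ref{02} directly converts the pattern into a run of digons, decreasing both $f$ and $b$ by one. If the pair appears in the opposite order (B, Ds, F), the same lemma applies to the traversal read in the reverse direction, which presents precisely the F–D–$\cdots$–D–B pattern. In either case the inductive hypothesis finishes this case, and we are reduced to $\min(f,b) = 0$. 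By replacing $G$ with $G^{-1}$ if needed, we may assume $b = 0$.

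Once $b=0$, only forward arrows and digons remain. The operation Sw.4$'$ swaps an adjacent F,D pair into D,F, and switching by $\alpha(x) = e^{-i\eta}$ at the appropriate vertex realises its inverse D,F $\mapsto$ F,D; both are routine verifications using $D(\alpha) H_{\eta} D(\alpha)^*$. Using these to slide forward arrows freely past digons in either direction, I would arrange the $f$ forward arrows into $f$ consecutive positions, producing exactly $C_n^f$. Since every step preserves the $H_{\eta}$-spectrum, $G$ is $H_{\eta}$-cospectral with $C_n^j$ for $j = f \in \{0,1,\dots, n\}$.

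The main obstacle I anticipate is the inductive cancellation step: one must guarantee that whenever both F and B arrows exist there is a specific F–D$\cdots$D–B (or B–D$\cdots$D–F) configuration, without extra F or B arrows sandwiched in between. Passing to the cyclic list of non-digon arcs handles this cleanly. The remaining sliding phase is then essentially bookkeeping once the validity of Sw.4$'$ and its inverse are in hand, and the use of $G^{-1}$ at the end is what allows $j$ to range up to $n$ rather than only $\lfloor n/2 \rfloor$.
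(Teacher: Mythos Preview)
Your approach is essentially the paper's: cancel oppositely oriented arcs via Lemma~\ref{02} until all non-digon arcs point one way, then slide them into a block with Sw.4$'$ (and its inverse), and finally absorb the $C_n^j$ versus $(C_n^j)^{-1}$ ambiguity. The paper handles that last step by noting $(C_n^k)^{-1}\cong C_n^k$; your conjugation argument $H_\eta(G^{-1})=\overline{H_\eta(G)}$ is an equally valid route to cospectrality.

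There is one slip in the cancellation step. Reading a B--D$\cdots$D--F segment in the reverse direction does \emph{not} produce the F--D$\cdots$D--B hypothesis of Lemma~\ref{02}. In the lemma's pattern both terminal arcs point \emph{inward} toward the segment; in B--D$\cdots$D--F both point \emph{outward}, and that property is independent of the direction of traversal, so reversing the reading leaves you with the same (unusable) configuration. The repair is immediate, though: in any cyclic sequence of non-digon types containing both F and B there must exist an F immediately followed (among non-digons) by a B as well as a B followed by an F, so you can always choose the F--D$\cdots$D--B instance and apply Lemma~\ref{02} directly; the B--D$\cdots$D--F case never needs to be treated. Alternatively, one may state the dual of Lemma~\ref{02} (both end-arcs outward), proved with Sw.3$'$ in place of Sw.2$'$.
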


\begin{proof}
By Lemma~\ref{02},
we have a switched mixed cycle $G'$ such that
the directions of all arcs are aligned clockwise or anticlockwise.
Applying Sw.4$'$ many times, arcs in the graph are replaced consecutively.
This graph is $C_n^j$ or $(C_n^k)^{-1}$ for some $j, k \in \{0,1,\dots, n\}$.
Note that $(C_n^k)^{-1}$ is isomorphic to $C_n^k$.
\end{proof}

Figure~\ref{30} shows that switching yields the mixed graph of type $3$ from a mixed cycle.
Note that Sw.3$'$ is actually unnecessary.
However, it can be used to obtain $C_n^j$ with less switching in some cases.

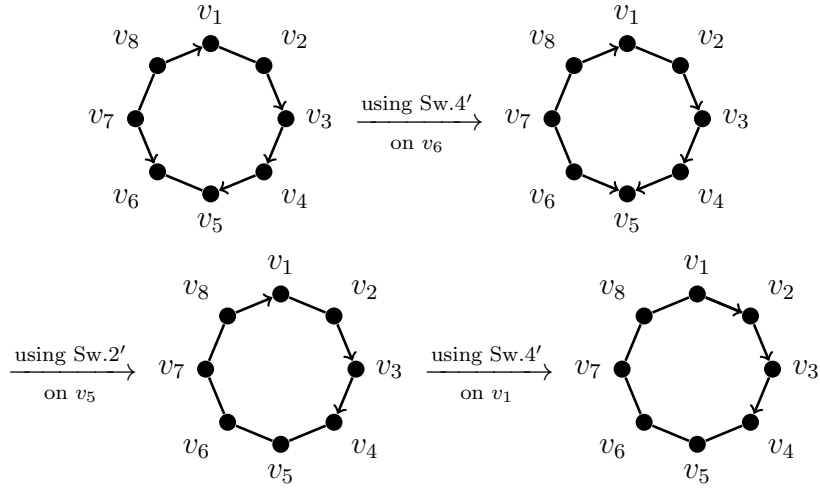
\begin{figure}[h]
\begin{center}
{
\begin{tikzpicture}
[scale = 0.5,
slim/.style={circle,fill=black, inner sep = 0.8mm},
]
\node[slim] (v1) at (0,2) [label = above:$v_1$] {};
\node[slim] (v2) at (1.414,1.414) [label =above right:$v_2$] {};
\node[slim] (v3) at (2,0) [label = right:$v_3$] {};
\node[slim] (v4) at (1.414,-1.414) [label =below right:$v_4$] {};
\node[slim] (v5) at (0,-2) [label = below:$v_5$] {};
\node[slim] (v6) at (-1.414,-1.414) [label = below left:$v_6$] {};
\node[slim] (v7) at (-2,0) [label = left:$v_7$] {};
\node[slim] (v8) at (-1.414,1.414) [label = above left:$v_8$] {};
\draw[line width = 1pt][->] (v2) to  (v3);
\draw[line width = 1pt][->] (v4) to  (v5);
\draw[line width = 1pt] (v6) to  (v5);
\draw[line width = 1pt][->] (v7) to  (v6);
\draw[line width = 1pt] (v7) to  (v8);
\draw[line width = 1pt][->] (v8) to  (v1);
\draw[line width = 1pt][->] (v3) to  (v4);
\draw[line width = 1pt] (v1) -- (v2);
\end{tikzpicture}} \raisebox{15mm}{$\xrightarrow[\text{on $v_6$}]{\text{using Sw.4$'$}}$}
{
\begin{tikzpicture}
[scale = 0.5,
slim/.style={circle,fill=black, inner sep = 0.8mm},
]
\node[slim] (v1) at (0,2) [label = above:$v_1$] {};
\node[slim] (v2) at (1.414,1.414) [label =above right:$v_2$] {};
\node[slim] (v3) at (2,0) [label = right:$v_3$] {};
\node[slim] (v4) at (1.414,-1.414) [label =below right:$v_4$] {};
\node[slim] (v5) at (0,-2) [label = below:$v_5$] {};
\node[slim] (v6) at (-1.414,-1.414) [label = below left:$v_6$] {};
\node[slim] (v7) at (-2,0) [label = left:$v_7$] {};
\node[slim] (v8) at (-1.414,1.414) [label = above left:$v_8$] {};
\draw[line width = 1pt][->] (v2) to  (v3);
\draw[line width = 1pt][->] (v3) to  (v4);
\draw[line width = 1pt] (v7) to  (v6);
\draw[line width = 1pt] (v7) to  (v8);
\draw[line width = 1pt][->] (v8) to  (v1);
\draw[line width = 1pt] (v1) -- (v2);
\draw[line width = 1pt][->] (v4) to (v5);
\draw[line width = 1pt][->] (v6) to (v5);
\end{tikzpicture}}\\
 \raisebox{15mm}{$\xrightarrow[\text{on $v_5$}]{\text{using Sw.2$'$}}$}
{
\begin{tikzpicture}
[scale = 0.5,
slim/.style={circle,fill=black, inner sep = 0.8mm},
]
\node[slim] (v1) at (0,2) [label = above:$v_1$] {};
\node[slim] (v2) at (1.414,1.414) [label =above right:$v_2$] {};
\node[slim] (v3) at (2,0) [label = right:$v_3$] {};
\node[slim] (v4) at (1.414,-1.414) [label =below right:$v_4$] {};
\node[slim] (v5) at (0,-2) [label = below:$v_5$] {};
\node[slim] (v6) at (-1.414,-1.414) [label = below left:$v_6$] {};
\node[slim] (v7) at (-2,0) [label = left:$v_7$] {};
\node[slim] (v8) at (-1.414,1.414) [label = above left:$v_8$] {};
\draw[line width = 1pt][->] (v2) to  (v3);
\draw[line width = 1pt][->] (v3) to  (v4);
\draw[line width = 1pt][->] (v8) to  (v1);
\draw[line width = 1pt] (v1) -- (v2);
\draw[line width = 1pt] (v4) -- (v5) -- (v6) -- (v7) -- (v8);
\end{tikzpicture}}
\raisebox{15mm}{$\xrightarrow[\text{on $v_1$}]{\text{using Sw.4$'$}}$}
{
\begin{tikzpicture}
[scale = 0.5,
slim/.style={circle,fill=black, inner sep = 0.8mm},
]
\node[slim] (v1) at (0,2) [label = above:$v_1$] {};
\node[slim] (v2) at (1.414,1.414) [label =above right:$v_2$] {};
\node[slim] (v3) at (2,0) [label = right:$v_3$] {};
\node[slim] (v4) at (1.414,-1.414) [label =below right:$v_4$] {};
\node[slim] (v5) at (0,-2) [label = below:$v_5$] {};
\node[slim] (v6) at (-1.414,-1.414) [label = below left:$v_6$] {};
\node[slim] (v7) at (-2,0) [label = left:$v_7$] {};
\node[slim] (v8) at (-1.414,1.414) [label = above left:$v_8$] {};
\draw[line width = 1pt][->] (v2) to  (v3);
\draw[line width = 1pt][->] (v3) to  (v4);
\draw[line width = 1pt][->] (v1) to  (v2);
\draw[line width = 1pt] (v1) -- (v2);
\draw[line width = 1pt] (v4) -- (v5) -- (v6) -- (v7) -- (v8) -- (v1);
\end{tikzpicture}}
\caption{Switching a mixed cycle} \label{30}
\end{center}
\end{figure}

\subsection{Determinants of $H_{\eta}(C_n^j)$}
Next, we show that different types of mixed cycles have different spectra.
We focus on the constant term of the characteristic polynomial of $H_{\eta}(C_n^j)$.
Let $P_n$ be the undirected path graph on $n$ vertices.

\begin{lem} \label{50}
{\it
We have
\[ \det H_{\eta}(P_n) = (-1)^{\lfloor \frac{n}{2} \rfloor} \frac{1+(-1)^n}{2}. \]
}
\end{lem}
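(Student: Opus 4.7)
The plan is to reduce the computation to the determinant of the ordinary adjacency matrix of the undirected path $P_n$, and then evaluate that via the Leibniz formula by counting perfect matchings of $P_n$.

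First, since $P_n$ is a tree (hence a mixed tree with girth $\infty$), Corollary~\ref{60} gives
\[
\det(\lambda I - H_{\eta}(P_n)) = \det(\lambda I - H_{\eta}(P_n^{\pm})),
\]
and $H_{\eta}(P_n^{\pm})$ is simply the usual $0$--$1$ adjacency matrix $A$ of $P_n$. Substituting $\lambda = 0$ and using $\det(-M) = (-1)^n \det M$ on both sides yields $\det H_{\eta}(P_n) = \det A$, so it suffices to compute $\det A$.

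Next, because $A$ has zero diagonal, only fixed-point-free permutations contribute to $\det A = \sum_{\sigma} \sgn(\sigma) \prod_x A_{x,\sigma(x)}$. Since $P_n$ has infinite girth, Lemma~\ref{00} (applied to $A$ in place of $H_\eta$, or directly) forces every factor of any contributing $\sigma$ to be a transposition; such $\sigma$ are exactly the fixed-point-free involutions, and under the condition that each $\{x, \sigma(x)\}$ is an edge these correspond bijectively to perfect matchings of $P_n$. If $n$ is odd, $P_n$ has no perfect matching, so $\det A = 0$, matching the claimed value because $\frac{1+(-1)^n}{2}=0$. If $n$ is even, then $P_n = x_1 x_2 \cdots x_n$ has a unique perfect matching, namely $\{x_1 x_2, x_3 x_4, \dots, x_{n-1}x_n\}$; the corresponding permutation is a product of $n/2$ disjoint transpositions with sign $(-1)^{n/2}$, and its product of entries equals $1$. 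Thus $\det A = (-1)^{n/2} = (-1)^{\lfloor n/2 \rfloor}$, agreeing with the formula since $\frac{1+(-1)^n}{2}=1$.

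Combining the two cases gives $\det H_{\eta}(P_n) = (-1)^{\lfloor n/2 \rfloor}\frac{1+(-1)^n}{2}$, as asserted. The only subtlety worth pausing on is verifying that $P_n$ admits exactly one perfect matching when $n$ is even; this is immediate by induction on $n$, since $x_1$ must be matched with its unique neighbor $x_2$, reducing the problem to $P_{n-2}$. Every other step is bookkeeping, so I expect no serious obstacle.
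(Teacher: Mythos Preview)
Your proof is correct, but it takes a different route from the paper's. The paper computes $\det H_\eta(P_n)$ by two successive cofactor expansions to obtain the recurrence $\det H_\eta(P_n) = -\det H_\eta(P_{n-2})$, and then closes with the base cases $\det H_\eta(P_1)=0$, $\det H_\eta(P_2)=-1$. You instead interpret the Leibniz expansion combinatorially: since $P_n$ is acyclic, the only contributing permutations are fixed-point-free involutions, which correspond to perfect matchings, and $P_n$ has exactly one (with sign $(-1)^{n/2}$) when $n$ is even and none when $n$ is odd. Both arguments are short; yours is more conceptual and in fact foreshadows the elementary-subgraph machinery the paper uses later in Section~\ref{104}, while the paper's recursion is purely computational and self-contained.

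One small remark: your opening appeal to Corollary~\ref{60} is unnecessary here. In the paper $P_n$ is already the \emph{undirected} path, so every arc lies in $\mathcal{A}\cap\mathcal{A}^{-1}$ and $H_\eta(P_n)$ is literally the ordinary adjacency matrix $A$ from the outset; there is nothing to reduce. This does not affect the validity of your argument, only its economy.
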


\begin{proof}
We will show that
\[ \det H_{\eta}(P_n) = \begin{cases}
1 \qquad &\text{if $n \equiv 0 \pmod 4$,} \\
-1 \qquad &\text{if $n \equiv 2 \pmod 4$,} \\
0 \qquad &\text{if $n \equiv 1,3 \pmod 4$.} \\
\end{cases} \]
The determinant is
\[
\det(H_{\eta}(P_{n})) =
\begin{vmatrix}
0 & 1 & 0 & \cdots & \cdots & \cdots & 0 \\
1 & 0 & 1 & 0 & \cdots & \cdots & 0 \\
0 & 1 & 0 & 1 & 0 & \cdots & 0 \\
\vdots &  \ddots & \ddots & \ddots & \ddots &  & \vdots  \\
\vdots &  &  \ddots & \ddots & \ddots & \ddots & 0 \\
0 & \cdots & \cdots & 0 & 1 & 0 & 1 \\
0 & \cdots & \cdots & \cdots & 0 & 1 & 0
\end{vmatrix}.
\]
We first apply the cofactor expansion along the first row,
and we then apply it again along the first column.
We have $\det(H_{\eta}(P_{n})) = - \det(H_{\eta}(P_{n-2}))$.
Since $\det H_{\eta}(P_1) = 0$ and $\det H_{\eta}(P_2) = -1$,
we have the statement.
\end{proof}

\begin{pro} \label{35}
{\it
We have
\[ \det H_{\eta}(C_n^j) = (-1)^{n+1} 2 \cos (\eta j) + (-1)^{\lfloor \frac{n}{2} \rfloor} (1 + (-1)^n). \]
}
\end{pro}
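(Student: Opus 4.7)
\smallskip

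\noindent\textbf{Proof plan.}
The plan is to expand $\det H_{\eta}(C_n^j)$ via the Leibniz formula and show that only very few permutations contribute a nonzero term. Writing $H_\eta = H_\eta(C_n^j)$, we have
\[
\det H_\eta = \sum_{\sigma \in \Sym(V)} \sgn(\sigma) \prod_{x \in V} (H_\eta)_{x,\sigma(x)}.
\]
Since $(H_\eta)_{x,x}=0$, any $\sigma$ with a fixed point contributes $0$, so only derangements survive. I then apply the same cycle-subgraph reasoning used in Lemma~\ref{00}: if $\sigma_i=(y_1 y_2\cdots y_t)$ is a factor with $t\geq 2$, then $y_1,\dots,y_t$ must be a $t$-cycle of the underlying graph $C_n$. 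Since the only subgraph-cycles of $C_n$ are edges ($t=2$) and the full cycle ($t=n$), each factor of a contributing $\sigma$ is either a transposition on an edge or the full $n$-cycle.

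Consequently, a contributing permutation $\sigma$ is of exactly one of two types: (I) a single $n$-cycle going around the cycle, of which there are exactly two, $\tau_+=(x_1 x_2\cdots x_n)$ and $\tau_-=\tau_+^{-1}$; or (II) a perfect matching of $C_n$, viewed as a product of $n/2$ disjoint edge-transpositions, which exists only when $n$ is even and consists of exactly two matchings (the two ``alternating'' matchings). The next step is to compute each contribution. For type (I), $\sgn(\tau_{\pm})=(-1)^{n-1}$ and, by the definition of $C_n^j$,
\[
\prod_{k=1}^{n} (H_\eta)_{x_k, x_{k+1}} = (e^{i\eta})^{j}\cdot 1^{n-j} = e^{ij\eta},
\]
with the analogous product for $\tau_-$ giving $e^{-ij\eta}$. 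Hence the type (I) contribution is
\[
(-1)^{n-1}\bigl(e^{ij\eta}+e^{-ij\eta}\bigr) = (-1)^{n+1}\cdot 2\cos(\eta j).
\]
For type (II), each transposition on an edge contributes $(H_\eta)_{x,y}(H_\eta)_{y,x}=|(H_\eta)_{x,y}|^2=1$ by \eqref{01}, so the full product is $1$ and $\sgn$ is $(-1)^{n/2}$. Summing over the two matchings yields $2\cdot(-1)^{n/2}$, which equals $(-1)^{\lfloor n/2\rfloor}(1+(-1)^n)$ for even $n$, and agrees with $0$ for odd $n$ (where type (II) is empty and $1+(-1)^n=0$).

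Adding the contributions gives exactly the claimed formula. I expect the main delicate point to be the cycle-factor argument for odd-length factors of $\sigma$: namely, convincing the reader cleanly that any non-transposition factor must have length equal to $n$ because $C_n$ has no proper sub-cycles. Once that is stated, the sign and product calculations are straightforward bookkeeping, and the case split on the parity of $n$ is packaged into the single factor $1+(-1)^n$.
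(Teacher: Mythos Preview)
Your argument is correct. The classification of contributing permutations into the two full $n$-cycles and (for even $n$) the two perfect matchings is sound: any fixed point kills the product because the diagonal of $H_\eta$ vanishes, and any factor of length $t$ with $3\le t<n$ would force a $t$-cycle in $C_n$, which does not exist. The sign and product computations for both types are accurate, and the parity packaging via $1+(-1)^n$ is fine.

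Your route, however, differs from the paper's. The paper proves the formula by two rounds of cofactor (Laplace) expansion: expanding along the first row produces two $(n-1)\times(n-1)$ minors $D_1,D_2$, and a further expansion along the first column reduces each to $\det H_\eta(P_{n-2})$ plus a monomial $e^{\pm i j\eta}$; the path determinant is then supplied by Lemma~\ref{50} (together with Corollary~\ref{60}). Your approach is the Leibniz/elementary-subgraph computation---essentially Sachs' coefficient theorem specialized to the constant term---and is closer in spirit to what the paper later invokes from \cite{B} in Section~4.4. The advantage of your method is that it is self-contained: it does not rely on Lemma~\ref{50} or Corollary~\ref{60}, and it makes the combinatorial source of the two summands transparent. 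The paper's cofactor approach, by contrast, reuses machinery already in place and generalizes more readily to other structured tridiagonal-plus-corner matrices, but at the cost of an auxiliary lemma on path determinants.
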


\begin{proof}
We calculate $\det H_{\eta}(C_{n}^{j})$,
which is
\[
\begin{vmatrix}
0 & e^{\eta i} & 0 & \cdots & \cdots & \cdots  & \cdots & 0 & 1 \\
e^{-\eta i} & 0 & e^{\eta i} & 0 & \cdots & \cdots & \cdots & \cdots & 0 \\
0 &  \ddots & \ddots & \ddots &  & & & &  \vdots  \\
\vdots &  & e^{-\eta i} & 0 & e^{\eta i} & & & & \vdots  \\
\vdots &  &  & e^{-\eta i} & 0 & 1 &  & & \vdots  \\
\vdots &  &  &  & 1 & 0 & 1 & & \vdots  \\
\vdots &  & &  &  & \ddots & \ddots & \ddots & 0 \\
0 & \cdots & \cdots & \cdots & \cdots & 0 & 1 & 0 & 1 \\
1 & 0  & \cdots & \cdots & \cdots & \cdots & 0 & 1 & 0 
\end{vmatrix}.
\]
By the cofactor expansion along the first row,
we have $\det H_{\eta}(C_n^j) = -e^{\eta i}D_1 + (-1)^{n+1}D_2$, where
\[ D_1 =
\begin{vmatrix}
e^{-\eta i} & e^{\eta i} & 0 & \cdots & \cdots & \cdots  & \cdots & \cdots & 0 & 0 \\
0 & 0 & e^{\eta i} & 0 & \cdots & \cdots & \cdots & \cdots & \cdots & 0 \\
\vdots &  e^{-\eta i} & 0 & e^{\eta i} &  & & & & & \vdots  \\
\vdots & & \ddots & \ddots & \ddots &  & & & & \vdots  \\
\vdots & & & e^{-\eta i} & 0 & e^{\eta i} & & & & \vdots  \\
\vdots & & &  & e^{-\eta i} & 0 & 1 &  & & \vdots  \\
\vdots & & &  &  & 1 & 0 & 1 & & \vdots  \\
\vdots & & & &  &  & \ddots & \ddots & \ddots & 0 \\
0 & \cdots & \cdots & \cdots & \cdots & \cdots & 0 & 1 & 0 & 1 \\
1 & 0  & \cdots & \cdots & \cdots & \cdots & \cdots & 0 & 1 & 0 
\end{vmatrix},
\]
and
\[ D_2 =
\begin{vmatrix}
e^{-\eta i} & 0 & e^{\eta i} & 0 & \cdots & \cdots  & \cdots & \cdots & 0 & 0 \\
0  & e^{-\eta i} & 0 & e^{\eta i} & \cdots & \cdots & \cdots & \cdots & \cdots & 0 \\
\vdots & & e^{-\eta i} & 0 & e^{\eta i} &  & & & &  \vdots  \\
\vdots & & & \ddots & \ddots & \ddots &  & & & \vdots  \\
\vdots & & & & e^{-\eta i} & 0 & e^{\eta i} & & & \vdots  \\
\vdots & & & & & e^{-\eta i} & 0 & 1 &  & \vdots  \\
\vdots & & & & &  & 1 & 0 & \ddots & \vdots  \\
\vdots & & & &  &  &  & \ddots & \ddots & 1 \\
0 & \cdots & \cdots & \cdots & \cdots & \cdots &  & 0 & 1 & 0 \\
1 & 0  & \cdots & \cdots & \cdots & \cdots & \cdots &  & 0 & 1 
\end{vmatrix}.
\]
We apply the cofactor expansion along the first column to $D_1$.
By Corollary~\ref{60}, we have
\begin{align*}
D_1 &= (-1)^{1+1} e^{-\eta i} \det H_{\eta}(P_{n-2})+ (-1)^{(n-1)+1} e^{(j-1)\eta i} \\
&= e^{-\eta i} \det H_{\eta}(P_{n-2}) + (-1)^{n} e^{(j-1) \eta i}.
\end{align*}
Applying the cofactor expansion along the first column to $D_2$,
we have
\begin{align*}
D_2 &= (-1)^{1+1}e^{-\eta i} \cdot e^{-(j-1)\eta i} + (-1)^{(n-1) + 1} \det H_{\eta}(P_{n-2}) \\
&= e^{-j \eta i} + (-1)^{n} \det H_{\eta}(P_{n-2}).
\end{align*}
Therefore, by Lemma~\ref{50},
\begin{align*}
\det H_{\eta}(C_n^j) &= -e^{\eta i}D_1 + (-1)^{n+1}D_2 \\
&= (-1)^{n+1} 2 \cos (\eta j) + (-2) \det H_{\eta}(P_{n-2}) \\
&= (-1)^{n+1} 2 \cos (\eta j) + (-2) \cdot (-1)^{\lfloor \frac{n-2}{2} \rfloor} \frac{1+(-1)^{n-2}}{2} \\
&= (-1)^{n+1} 2 \cos (\eta j) + (-1)^{\lfloor \frac{n}{2} \rfloor} (1 + (-1)^n).
\end{align*}
\end{proof}

Proposition~\ref{03} and Proposition~\ref{35} derive Theorem~\ref{Main1},
which is our first main theorem.
In addition,
Proposition~\ref{35} yields that, except for a finite number of $\eta$,
\[ \det (\lambda I - H_{\eta}(C_n^j)) \neq \det (\lambda I - H_{\eta}(C_n^k)) \]
for $j \neq k$.
We supplement the phrase ``except for a finite number of $\eta$" here.
For example, we consider $\eta = \frac{\pi}{2}$ and the mixed cycles of length 4.
We have
\begin{align*}
\det H_{\frac{\pi}{2}}(C_4^j) &= 2 -2\cos \frac{j \pi}{2} \\
&= \begin{cases}
0 \qquad &\text{if $j \in \{0, 4\}$,} \\
2 \qquad &\text{if $j \in \{1, 3\}$,}  \\
4 \qquad &\text{if $j=2$.}
\end{cases}
\end{align*}
As this example points out,
when $\eta \in \MB{Q}\pi$ and the denominator of $\eta / \pi$ is smaller than a given $n$,
the determinants could be equal for different types of mixed cycles.
There are only a finite number of such $\eta$ for given $n$.
This is the reason why the only three types of mixed cycles appeared in the study of \cite{AGNN}.

\subsection{Characteristic polynomials of $H_{\eta}(C_n^j)$}

On the other hand,
if the determinants are same, the characteristic polynomials of mixed cycles are actually equal.
We find the characteristic polynomial of $C_n^j$.
An {\it elementary subgraph} of an undirected graph $\G$ is
a subgraph of $\G$ such that every component is either $K_2$ or an undirected cycle.
Let $\MC{H}(\G)$ denote the set of all the elementary subgraphs of $\G$,
and let $\MC{H}_l(\G)$ denote the set of all the elementary subgraphs of $\G$ on $l$ vertices.
The {\it rank} and {\it corank} of an undirected graph $\G = (V, E)$ are, respectively,
$r(\G) = |V| - c$ and $s(\G) = |E| - |V| + c$,
where $c$ is the number of components of $\G$.
Let $C_n$ be the undirected cycle graph on $n$ vertices.

\begin{pro}
{\it
We have
\[ \det (\lambda I - H_{\eta}(C_n^j)) = \sum_{k = 0}^{\lfloor \frac{n-1}{2} \rfloor}
(-1)^k \frac{n}{n-k} \binom{n-k}{k} \lambda^{n-2k} - 2 \cos (\eta j) + (-1)^{\lfloor \frac{n}{2} \rfloor} (1 + (-1)^n). \]
}
\end{pro}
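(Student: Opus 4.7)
The plan is to assemble the formula from three ingredients that are already available: Proposition~\ref{10} (which determines the non-constant coefficients), a Sachs-style expansion of the characteristic polynomial of the underlying cycle (which makes those coefficients explicit), and Proposition~\ref{35} (which supplies the constant term).

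First, since $C_n^j$ has girth $n$, I would apply Proposition~\ref{10} with $s = n-1$ to conclude that the coefficients $a_1, \dots, a_{n-1}$ of $\det(\lambda I - H_{\eta}(C_n^j))$ coincide with those of $\det(\lambda I - H_{\eta}((C_n^j)^{\pm})) = \det(\lambda I - A(C_n))$, the characteristic polynomial of the underlying undirected cycle. So only the constant term can depend on $\eta$ and $j$.

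Second, I would determine these shared coefficients via Sachs' theorem: the coefficient of $\lambda^{n-l}$ in $\det(\lambda I - A(C_n))$ equals $\sum_{H \in \MC{H}_l(C_n)} (-1)^{p(H)} 2^{c(H)}$, where $p(H)$ is the number of components of $H$ and $c(H)$ is the number of cyclic components. For $l < n$, no proper subgraph of $C_n$ is itself a cycle, so every elementary subgraph on $l$ vertices is a $k$-matching with $l = 2k$. The classical count of $k$-matchings on an $n$-cycle is $\frac{n}{n-k}\binom{n-k}{k}$, giving
\[ a_{2k} = (-1)^k \frac{n}{n-k}\binom{n-k}{k} \qquad \text{for } 1 \leq k \leq \lfloor (n-1)/2 \rfloor, \]
and $a_l = 0$ for odd $l < n$. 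Writing $\lambda^n$ as the $k = 0$ term (since $\frac{n}{n}\binom{n}{0}=1$) gives the displayed matching sum in the target identity.

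Third, the constant term is $a_n = (-1)^n \det H_{\eta}(C_n^j)$. Substituting Proposition~\ref{35} and performing a brief parity split (when $n$ is odd the second summand vanishes because $1+(-1)^n = 0$; when $n$ is even the extra factor $(-1)^n = 1$ leaves the sign as $(-1)^{\lfloor n/2 \rfloor}$) produces
\[ a_n = -2\cos(\eta j) + (-1)^{\lfloor n/2 \rfloor}(1 + (-1)^n). \]
Adding $a_n$ to the matching sum above yields the claimed formula. There is essentially no serious obstacle: the statement is a straightforward assembly of previously proved results. The only point requiring care is the parity bookkeeping in step three, and verifying that when $n$ is even the upper limit $\lfloor (n-1)/2 \rfloor = n/2 - 1$ correctly excludes the $k = n/2$ perfect-matching term, whose $\lambda^0$ contribution has been folded into the displayed constant alongside the $-2\cos(\eta j)$ coming from the single directed cycle in the Sachs expansion of $H_{\eta}(C_n^j)$.
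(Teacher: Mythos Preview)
Your proposal is correct and follows essentially the same approach as the paper: both use Proposition~\ref{10} to identify the non-constant coefficients with those of the undirected cycle, compute these via a Sachs/elementary-subgraph expansion together with the standard $k$-matching count $\frac{n}{n-k}\binom{n-k}{k}$ on $C_n$, and then invoke Proposition~\ref{35} for the constant term with the same parity bookkeeping. The paper attributes the Sachs expansion to Proposition~7.1 in Biggs and the matching count to an exercise in Godsil, but otherwise the arguments are the same.
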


\begin{proof}
Let
\begin{align*}
\det (\lambda I - H_{\eta}(C_n^j)) &= \lambda^n + a_1 \lambda^{n-1} + \cdots + a_{n-1} \lambda + a_n, \\
\det (\lambda I - H_{\eta}(C_n)) &= \lambda^n + b_1 \lambda^{n-1} + \cdots + b_{n-1} \lambda + b_n.
\end{align*}
Fix $l \in \{1, \dots, n-1 \}$.
Since the girth of $C_n^j$ is $n$,
we have $a_l = b_l$ by Proposition~\ref{10}.
Also,
\begin{equation} \label{35a}
\MC{H}_l(C_n) =
\begin{cases}
\{ \G' \in \MC{H}(C_n) \mid \G' \simeq \tfrac{l}{2} K_2 \} \quad &\text{if $l$ is even,} \\
\emptyset \quad &\text{if $l$ is odd,}
\end{cases}
\end{equation}
where $\G' \simeq \tfrac{l}{2} K_2$ denotes that the graph $\G'$ is isomorphic to
the disjoint union of the $\frac{l}{2}$ complete graphs $K_2$.
Thus, $b_l = 0$ if $l$ is odd.
In addition, if $l$ is even,
by Proposition~7.1 in \cite{B} and (\ref{35a}),
\begin{align*}
b_l &= \sum_{\G' \in \MC{H}_l(C_n)} (-1)^{r(\G')} 2^{s(\G')} \\
&= \sum_{\G' \in \MC{H}_l (C_n)} (-1)^{\frac{l}{2}} \\
&= (-1)^{\frac{l}{2}} | \{ \text{$\tfrac{l}{2}$-matching in $C_n$} \} | \\
&= (-1)^{\frac{l}{2}} \frac{n}{n-\frac{l}{2}} \binom{n-\frac{l}{2}}{\frac{l}{2}}.
\end{align*}
The last equality is given by Exercises in p.14 of \cite{G}.
By Proposition~\ref{35}, the characteristic polynomial $\det (\lambda I - H_{\eta}(C_n^j))$ is
\begin{align*}
& \, \sum_{k = 0}^{\lfloor \frac{n-1}{2} \rfloor} (-1)^k \frac{n}{n-k} \binom{n-k}{k} \lambda^{n-2k}
+ (-1)^n \{ (-1)^{n+1} 2 \cos (\eta j) + (-1)^{\lfloor \frac{n}{2} \rfloor} (1 + (-1)^n) \} \\
=& \sum_{k = 0}^{\lfloor \frac{n-1}{2} \rfloor} (-1)^k \frac{n}{n-k} \binom{n-k}{k} \lambda^{n-2k}
-2 \cos (\eta j) + (-1)^{\lfloor \frac{n}{2} \rfloor + n} (1 + (-1)^n) \\
=& \sum_{k = 0}^{\lfloor \frac{n-1}{2} \rfloor} (-1)^k \frac{n}{n-k} \binom{n-k}{k} \lambda^{n-2k}
-2 \cos (\eta j) + (-1)^{\lfloor \frac{n}{2} \rfloor} (1 + (-1)^n).
\end{align*}
We note that $(-1)^{\lfloor \frac{n}{2} \rfloor + n} \neq (-1)^{\lfloor \frac{n}{2} \rfloor}$,
but $(-1)^{\lfloor \frac{n}{2} \rfloor + n} (1 + (-1)^n) = (-1)^{\lfloor \frac{n}{2} \rfloor} (1 + (-1)^n)$
in the last calculation.
We have the statement.
\end{proof}

\subsection{Mixed graphs as $\MB{T}$-gain graphs}
In this subsection, we briefly touch gain graphs.
The $\eta$-Hermitian adjacency matrix is also seen as an adjacency matrix of a $\MB{T}$-gain graph.
We refer to \cite{MKS, SK} for readers.
We use notations and terminologies in \cite{MKS, SK}.
The $\eta$-Hermitian adjacency matrix of a mixed graph $G = (V, \MC{A})$
is the adjacency matrix of the $\MB{T}$-gain graph $\Phi = (G^{\pm}, \MB{T}, \varphi)$
defined by the $\MB{T}$-gain $\varphi : \MC{A}^{\pm} \to \MB{T}$ such that
\[
\varphi(a) =
\begin{cases}
1 \qquad &\text{if $a \in \MC{A} \cap \MC{A}^{-1}$,} \\
e^{\eta i} \qquad &\text{if $a \in \MC{A} \setminus \MC{A}^{-1}$,} \\
e^{-\eta i} \qquad &\text{if $a \in \MC{A}^{-1} \setminus \MC{A}$,} \\
0 \qquad &\text{otherwise.}
\end{cases}
\]
In \cite{GH, MKS},
the authors mentioned the determinants of the adjacency matrices of $\MB{T}$-gain graphs.
In addition, the coefficients of the characteristic polynomials are also found by using principal minors.
The discussions in this section can also be carried out in terms of $\MB{T}$-gain graphs.

\section{Preliminaries on quantum walks defined by mixed graphs} \label{105}
Let $\eta \in [0, 2\pi)$,
and let $G = (V, \MC{A})$ be a mixed graph.
The {\it $\eta$-function} $\theta : \MC{A}^{\pm} \to \MB{R}$ of a mixed graph $G$ is defined by
\[
\theta(a) = \begin{cases}
\eta \qquad &\text{if $a \in \MC{A} \setminus \MC{A}^{-1}$,} \\
-\eta \qquad &\text{if $a \in \MC{A}^{-1} \setminus \MC{A}$,} \\
0 \qquad &\text{if $a \in \MC{A} \cap \MC{A}^{-1}$.}
\end{cases}
\]
Note that $\theta(a^{-1}) = -\theta(a)$ for any $a \in \MC{A}^{\pm}$.

\subsection{Several matrices on quantum walks defined by mixed graphs}

In \cite{KST},
the authors provided a quantum walk defined by a mixed graph.
Let $G = (V, \MC{A})$ be a mixed graph equipped with an $\eta$-function $\theta$.
We define several matrices (operators) on quantum walks.
The {\it boundary operator} $K = K(G) \in \MB{C}^{V \times \MC{A}^{\pm}}$ is defined by
\[ K_{x,a} = \frac{1}{\sqrt{\deg x}} \delta_{x,t(a)}. \]
The {\it coin operator} $C = C(G) \in \MB{C}^{\MC{A}^{\pm} \times \MC{A}^{\pm}}$ is defined by
$C = 2K^*K-I$.
The {\it shift operator} $S_{\theta} = S_{\theta}(G) \in \MB{C}^{\MC{A}^{\pm} \times \MC{A}^{\pm}}$
is defined by $(S_{\theta})_{a, b} = e^{\theta(b)i}\delta_{a,b^{-1}}$.
Define the {\it time evolution matrix} $U_{\theta} = U_{\theta}(G) \in \MB{C}^{\MC{A}^{\pm} \times \MC{A}^{\pm}}$ by $U_{\theta} = S_{\theta} C$.

\begin{lem} \label{22}
{\it
Let $G = (V, \MC{A})$ be a mixed graph equipped with an $\eta$-function $\theta$.
We have
\[ (U_{\theta})_{a,b} = e^{-\theta(a) i} \marukakko{
  \frac{2}{\deg_{G} t(b)} \delta_{o(a), t(b)} - \delta_{a, b^{-1}}
  } \]
for any $a,b \in \MC{A}^{\pm}$.
}
\end{lem}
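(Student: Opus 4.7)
The plan is to carry out the direct matrix multiplication $U_{\theta} = S_{\theta}C$ entry by entry and collect the terms. I would first unpack the coin operator via the formula $C_{c,b} = 2(K^*K)_{c,b} - \delta_{c,b}$. Since the entries of $K$ are real, $K^*_{c,x} = K_{x,c} = \frac{1}{\sqrt{\deg x}}\delta_{x,t(c)}$, so
\[ (K^*K)_{c,b} = \sum_{x \in V} \frac{1}{\deg x}\delta_{x,t(c)}\delta_{x,t(b)} = \frac{1}{\deg_G t(b)}\delta_{t(c),t(b)}, \]
which yields a clean expression for $C_{c,b}$.

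Next, I would write $(U_{\theta})_{a,b} = \sum_{c \in \MC{A}^{\pm}} (S_{\theta})_{a,c} C_{c,b}$ and use the fact that $(S_{\theta})_{a,c} = e^{\theta(c)i}\delta_{a,c^{-1}}$ forces $c = a^{-1}$ in the sum, collapsing it to a single term. Then I would substitute $t(c) = t(a^{-1}) = o(a)$ and $\theta(c) = \theta(a^{-1}) = -\theta(a)$ (recorded right after the definition of $\theta$) to obtain
\[ (U_{\theta})_{a,b} = e^{-\theta(a)i}\left( \frac{2}{\deg_G t(b)}\delta_{o(a),t(b)} - \delta_{a^{-1},b} \right), \]
and finally rewrite $\delta_{a^{-1},b} = \delta_{a,b^{-1}}$ to match the stated formula.

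There is no real obstacle here; it is purely a bookkeeping computation. The only place one must be careful is tracking that the shift operator pulls out $e^{-\theta(a)i}$ rather than $e^{\theta(a)i}$, which comes from the identity $\theta(a^{-1}) = -\theta(a)$ rather than any feature of the graph. Thus the proof should be short and self-contained, consisting of the two substitutions above followed by the simplification $\delta_{a^{-1},b} = \delta_{a,b^{-1}}$.
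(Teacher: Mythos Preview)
Your proposal is correct and follows essentially the same direct computation as the paper: both expand $U_\theta = S_\theta(2K^*K - I)$ entrywise, collapse the sum via the Kronecker delta in $S_\theta$, and then simplify using $t(a^{-1}) = o(a)$ and $\theta(a^{-1}) = -\theta(a)$. The only cosmetic difference is that you first assemble $C_{c,b}$ and then multiply by $S_\theta$, whereas the paper distributes $S_\theta$ across the two terms before summing; the underlying steps are identical.
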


\begin{proof}
Indeed,
\begin{align*}
(U_{\theta})_{a,b} &= (2 S_{\theta} K^* K - S_{\theta})_{a,b} \\
&= 2(S_{\theta} K^* K)_{a,b} - (S_{\theta})_{a,b} \\
&= 2 \sum_{z \in \MC{A}} \sum_{x \in V} (S_{\theta})_{a,z} (K^*)_{z, x} K_{x,b} - e^{\theta(b)i} \delta_{a, b^{-1}} \\
&= 2 \sum_{z \in \MC{A}} \sum_{x \in V} e^{\theta(z)i} \frac{1}{\sqrt{\deg x}} \frac{1}{\sqrt{\deg x}} \delta_{a,z^{-1}} \delta_{x, t(z)} \delta_{x, t(b)} - e^{\theta(a^{-1})i} \delta_{a, b^{-1}} \\
&= 2 \sum_{x \in V} e^{\theta(a^{-1})i} \frac{1}{\deg x} \delta_{x, t(a^{-1})} \delta_{x, t(b)} - e^{\theta(a^{-1})i}  \delta_{a, b^{-1}} \\
&= \frac{2  e^{\theta(a^{-1})i}}{\deg_{G} t(b)} \delta_{o(a), t(b)} - e^{\theta(a^{-1})i}  \delta_{a, b^{-1}} \\
&= e^{- \theta(a) i} \marukakko{ \frac{2}{\deg_{G} t(b)} \delta_{o(a), t(b)} - \delta_{a, b^{-1}} }.
\end{align*}
\end{proof}


The following is an important theorem that links quantum walks and spectral graph theory.
We cite \cite{KST}.
In \cite{HKSS2014}, Higuchi et al~proved a similar claim in more general models.

\begin{thm}[\cite{KST}] \label{21}
{\it
Let $G = (V, \MC{A})$ be a mixed graph equipped with an $\eta$-function $\theta$,
and let $U_{\theta}$ be the time evolution matrix.
Then we have
\[ \Spec(U_{\theta}) = \{ e^{\pm i \cos^{-1} (\lambda)} \mid \lambda \in \Spec(\NH_{\eta}(G)) \} \cup \{ 1 \}^{M_1} \cup \{-1\}^{M_{-1}}, \]
where
\begin{align*}
M_{1} = \frac{1}{2}|\MC{A}^{\pm}| - |V| + \dim \ker ( \NH_{\eta}(G) - I), \\
M_{-1} = \frac{1}{2}|\MC{A}^{\pm}| - |V| + \dim \ker ( \NH_{\eta}(G) + I).
\end{align*}
}
\end{thm}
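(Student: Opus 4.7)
The plan is to reduce the spectral analysis of the unitary $U_\theta$ on the arc space $\MB{C}^{\MC{A}^\pm}$ to that of the Hermitian discriminant $\NH_\eta$ on the vertex space, following the standard approach of Higuchi--Konno--Sato--Segawa.

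First I would verify three algebraic identities by direct computation from the definitions: $KK^* = I_V$ (so that $K^*K$ is the orthogonal projection onto $\mathcal{L}_1 := K^*(\MB{C}^V)$), $KS_\theta K^* = \NH_\eta(G)$, and $S_\theta^2 = I$. The last identity uses $\theta(a^{-1}) = -\theta(a)$ together with the fact that $a \mapsto a^{-1}$ is a fixed-point-free involution on $\MC{A}^\pm$. Set $L := \mathcal{L}_1 + S_\theta \mathcal{L}_1$. Since $C$ acts as the identity on $\mathcal{L}_1$, we have $U_\theta \mathcal{L}_1 = S_\theta \mathcal{L}_1$, and the relations $S_\theta^2 = I$ and $KS_\theta K^* = \NH_\eta$ show that $U_\theta(S_\theta \mathcal{L}_1) \subseteq L$. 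Thus both $L$ and $L^\perp$ are $U_\theta$-invariant.

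On $L$, introduce the surjection $\phi \colon \MB{C}^V \oplus \MB{C}^V \to L$ by $\phi(u_1, u_2) := K^* u_1 + S_\theta K^* u_2$. A direct calculation yields
\[ U_\theta\, \phi(u_1, u_2) = \phi\bigl( -u_2,\ u_1 + 2\NH_\eta u_2 \bigr), \]
so in $\phi$-coordinates $U_\theta$ is represented by the block matrix $M := \bigl(\begin{smallmatrix} 0 & -I \\ I & 2\NH_\eta \end{smallmatrix}\bigr)$. A standard block-determinant computation identifies $\det(xI - M) = \prod_\lambda (x^2 - 2\lambda x + 1)$, where $\lambda$ runs over $\Spec(\NH_\eta)$ with multiplicity; the roots of each factor are $e^{\pm i \arccos \lambda}$. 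To pass from $M$ to $U_\theta|_L$, one analyzes $\ker \phi$. Applying $K$ and $KS_\theta$ to the equation $\phi(u_1, u_2) = 0$ and using the identities above forces $u_2 \in \ker(\NH_\eta^2 - I)$ and $u_1 = -\NH_\eta u_2$. The key supporting lemma is that $u \in \ker(\NH_\eta \mp I)$ implies $K^* u \in \ker(S_\theta \mp I)$; this follows from the norm identity $\|K^* u \mp S_\theta K^* u\|^2 = 2\|u\|^2 \mp 2\langle u, \NH_\eta u\rangle = 0$. One deduces that $\ker \phi$ has dimension $\dim \ker(\NH_\eta - I) + \dim \ker(\NH_\eta + I)$, is $M$-invariant, and $M$ acts there as $\pm I$ on the two natural summands. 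Consequently the eigenvalues of $U_\theta$ on $L$ are exactly $\{ e^{\pm i \arccos \lambda} \mid \lambda \in \Spec(\NH_\eta(G)) \}$, with each $\lambda = \pm 1$ contributing a single $\pm 1$.

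On $L^\perp \subseteq \ker K$, the coin equals $-I$, so $U_\theta|_{L^\perp} = -S_\theta|_{L^\perp}$ and has eigenvalues $\pm 1$ with multiplicities $\dim(L^\perp \cap \ker(S_\theta \mp I))$. Using the identification $L \cap \ker(S_\theta + I) = (I - S_\theta) K^*(\MB{C}^V)$ of dimension $|V| - \dim \ker(\NH_\eta - I)$, the analogous statement for $\ker(S_\theta - I)$, and the trace-zero relation $\dim \ker(S_\theta \pm I) = \tfrac{1}{2}|\MC{A}^\pm|$ (which follows from $S_\theta^2 = I$ and $\mathrm{tr}\, S_\theta = 0$), one extracts the stated formulas for $M_{\pm 1}$. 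The main obstacle is precisely this bookkeeping near $\pm 1$: one must carefully track which $\pm 1$-eigenvalues of $U_\theta$ lift from $\NH_\eta$-eigenvectors on $L$, which of the algebraic multiplicities predicted by $M$ are absorbed into $\ker \phi$ (the block matrix has Jordan blocks at $\pm 1$ that collapse under the quotient), and which new $\pm 1$ appear freshly from the sign action of $-S_\theta$ on $L^\perp$.
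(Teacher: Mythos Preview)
The paper does not prove this theorem at all: it is quoted verbatim from \cite{KST}, with the remark that Higuchi--Konno--Sato--Segawa \cite{HKSS2014} proved it in greater generality. There is therefore nothing in the present paper to compare your argument against.

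That said, your sketch is exactly the standard HKSS-type spectral mapping argument that underlies \cite{KST} and \cite{HKSS2014}: verify $KK^*=I$, $KS_\theta K^*=\NH_\eta$, $S_\theta^2=I$; decompose $\MB{C}^{\MC{A}^\pm}$ into the ``inherited'' subspace $L=K^*(\MB{C}^V)+S_\theta K^*(\MB{C}^V)$ and its complement; represent $U_\theta$ on $L$ by the $2\times 2$ block matrix with characteristic polynomial $\prod_\lambda(x^2-2\lambda x+1)$; and count the residual $\pm 1$ eigenvalues on $L^\perp$ via $U_\theta|_{L^\perp}=-S_\theta|_{L^\perp}$ together with $\Tr S_\theta=0$. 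So your route is not ``different'' from the literature --- it \emph{is} the cited proof.

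One small slip to watch: on $L^\perp$ you have $U_\theta=-S_\theta$, so the $U_\theta$-eigenvalue $+1$ corresponds to $\ker(S_\theta+I)$, not $\ker(S_\theta-I)$; your $\mp$ convention as written has these swapped. This is harmless for the final formulas (which are symmetric in form) but should be corrected. The rest of your bookkeeping at $\pm 1$ --- identifying $\ker\phi$, showing it is $M$-invariant with $M$ acting by $\pm I$ on the summands, and computing $\dim(L\cap\ker(S_\theta\pm I))$ --- is on the right track and matches the argument in the references.
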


The operators (matrices) used in our quantum walks are summarized in Table~\ref{1000},
where $G = (V, \MC{A})$ is a mixed graph equipped with an $\eta$-function $\theta$.

\begin{table}[H]
  \centering
  \begin{tabular}{|c|c|c|c|}
\hline
Notation & Name & Indices of rows and columns & Definition \\
\hline
\hline
$K$ & Boundary & $V \times \MC{A}^{\pm}$ & $K_{x,a} =  \frac{1}{ \sqrt{\deg x} } \delta_{x, t(a)}$ \\
\hline
$C$ & Coin & $\MC{A}^{\pm} \times \MC{A}^{\pm}$ &$ C = 2K^*K - I$ \\
\hline
$S_{\theta}$ & Shift & $\MC{A}^{\pm} \times \MC{A}^{\pm}$ & $(S_{\theta})_{ab} = e^{\theta(b)i}\delta_{a,b^{-1}}$ \\
\hline
$U_{\theta}$ & Time evolution & $\MC{A}^{\pm} \times \MC{A}^{\pm}$  & $U_{\theta} = S_{\theta} C$ \\
\hline
\end{tabular}
 \caption{The operators (matrices) used in our quantum walk} \label{1000}
\end{table}

\subsection{Necessary and sufficient conditions on periodicity}
Let $U_{\theta}$ be a time evolution matrix of a mixed graph $G$ equipped with an $\eta$-function $\theta$.
We say that $G$ is {\it periodic} if there exists $\tau \in \MB{N}$ such that $U_{\theta}^{\tau} = I$.
When the mixed graph $G$ is periodic,
the {\it period} is defined by $\min \{ \tau \in \MB{N} \mid U_{\theta}^{\tau} = I \}$.

\begin{lem} \label{13}
{\it
Let $U_{\theta}$ be a time evolution matrix of a mixed graph $G$ equipped with an $\eta$-function $\theta$.
Then, we have
\begin{equation} \label{70}
\{ \tau \in \MB{N} \mid U_{\theta}^{\tau} = I \} =
\{ \tau \in \MB{N} \mid \lambda^{\tau} = 1 \text{ \emph{for any} $\lambda \in \Spec(U_{\theta})$} \}.
\end{equation}
In particular, $G$ is periodic if and only if
there exists $\tau \in \MB{N}$ such that $\lambda^{\tau} = 1$ holds
for any eigenvalue $\lambda$ of $U_{\theta}$.
}
\end{lem}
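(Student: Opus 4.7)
The plan is to exploit that $U_\theta$ is unitary so that the matrix identity $U_\theta^\tau = I$ can be converted, via unitary diagonalization, into the eigenvalue condition $\lambda^\tau = 1$ for every $\lambda \in \Spec(U_\theta)$. Concretely, I would first verify unitarity of $U_\theta$, then read off the set equality from the spectral decomposition.

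For unitarity, I would start from the boundary operator. Since every vertex of $G$ has positive degree (the assumption that $G^\pm$ is connected rules out isolated vertices), a direct computation gives
\[
(KK^*)_{x,y} = \sum_{a \in \MC{A}^{\pm}} \frac{\delta_{x,t(a)}\delta_{y,t(a)}}{\sqrt{\deg x \deg y}} = \delta_{x,y},
\]
so $KK^* = I$. Hence the coin $C = 2K^*K - I$ is self-adjoint and satisfies $C^2 = 4K^*KK^*K - 4K^*K + I = I$, so $C$ is unitary. The shift $S_\theta$ is unitary by a direct index computation using $|e^{\theta(b)i}| = 1$ and the involution $a \mapsto a^{-1}$ on $\MC{A}^{\pm}$. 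Therefore $U_\theta = S_\theta C$ is unitary as a product of unitaries.

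With unitarity in hand, the spectral theorem for normal operators provides a unitary $P$ and a diagonal matrix $\Lambda = \diag(\lambda_1, \ldots, \lambda_N)$ (with the $\lambda_k$ enumerating $\Spec(U_\theta)$ with multiplicity) such that $U_\theta = P \Lambda P^*$. Then $U_\theta^\tau = P \Lambda^\tau P^*$, so $U_\theta^\tau = I$ if and only if $\Lambda^\tau = I$, which is in turn equivalent to $\lambda_k^\tau = 1$ for every $k$. This gives the set equality (\ref{70}), and the ``in particular'' clause follows by taking nonemptiness of either side.

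I do not anticipate any substantial obstacle here: the one point that has to be checked carefully is the unitarity of $U_\theta$, and within that only the identity $KK^* = I$ depends genuinely on the graph (namely on the absence of isolated vertices). Once this is established, everything reduces to routine linear algebra via the spectral theorem.
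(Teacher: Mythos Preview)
Your proposal is correct and follows essentially the same approach as the paper: unitarily diagonalize $U_\theta$ and read off that $U_\theta^\tau = I$ is equivalent to $\lambda^\tau = 1$ for every eigenvalue. The paper simply asserts that $U_\theta$ is unitary, whereas you supply the verification via $KK^* = I$ and $C^2 = I$; otherwise the arguments are identical.
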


\begin{proof}
Since $U_{\theta}$ is unitary,
there exists a unitary matrix $Q$ such that
\[ Q^{*} U_{\theta} Q = \diag (\lambda_1, \cdots, \lambda_{2m}), \]
where $m$ is the number of edges of $G^{\pm}$.
Thus for $\tau \in \MB{N}$,
\[ Q^{*} U_{\theta}^{\tau} Q = \diag (\lambda_1^{\tau}, \cdots, \lambda_{2m}^{\tau}). \]
This implies~(\ref{70}).
\end{proof}

Define $\BM{e}^{(a)} \in \MB{C}^{\MC{A}^{\pm}}$ by $(\BM{e}^{(a)})_z = \delta_{a,z}$.
Let $\MC{E}_{\MC{A}^{\pm}} = \{ \BM{e}^{(a)} \mid a \in \MC{A}^{\pm} \}$.
This is the canonical basis of $\MB{C}^{\MC{A}^{\pm}}$.

\begin{lem} \label{55}
{\it
Let $U_{\theta}$ be a time evolution matrix of a mixed graph $G$ equipped with an $\eta$-function $\theta$.
Then, we have
\[ \{ \tau \in \MB{N} \mid U_{\theta}^{\tau} = I \} =
\{ \tau \in \MB{N} \mid U_{\theta}^{\tau}\BM{e}^{(a)} = \BM{e}^{(a)}
\text{ \emph{for any} $\BM{e}^{(a)} \in \MC{E}_{\MC{A}^{\pm}}$} \}.\]
In particular,
$G$ is periodic if and only if
there exists $\tau \in \MB{N}$ such that $U_{\theta}^{\tau}\BM{e}^{(a)} = \BM{e}^{(a)}$ holds
for any vector $\BM{e}^{(a)} \in \MC{E}_{\MC{A}^{\pm}}$.
}
\end{lem}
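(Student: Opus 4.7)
The plan is to prove the set equality by mutual inclusion; the ``in particular'' statement then follows formally by checking whether either side is nonempty.

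For the inclusion $\subseteq$, suppose $\tau \in \MB{N}$ satisfies $U_{\theta}^{\tau} = I$. Then for every $\BM{e}^{(a)} \in \MC{E}_{\MC{A}^{\pm}}$ we have $U_{\theta}^{\tau} \BM{e}^{(a)} = I \BM{e}^{(a)} = \BM{e}^{(a)}$, so $\tau$ lies in the right-hand set. This direction is essentially tautological.

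For the reverse inclusion $\supseteq$, the key observation is that $\MC{E}_{\MC{A}^{\pm}}$ is a basis of $\MB{C}^{\MC{A}^{\pm}}$, and that $U_{\theta}^{\tau} \BM{e}^{(a)}$ is exactly the $a$-th column of the matrix $U_{\theta}^{\tau}$. Thus the hypothesis $U_{\theta}^{\tau} \BM{e}^{(a)} = \BM{e}^{(a)}$ for every $a \in \MC{A}^{\pm}$ says that every column of $U_{\theta}^{\tau}$ coincides with the corresponding column of $I$, i.e.\ $U_{\theta}^{\tau} = I$. Equivalently, one may invoke linearity: a linear map that agrees with the identity on a spanning set must be the identity on the whole space.

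With the set equality established, the ``in particular'' clause follows at once: $G$ is periodic precisely when the set on the left of the displayed equality is nonempty, which by the equality is the same as the set on the right being nonempty, i.e.\ the existence of some $\tau \in \MB{N}$ with $U_{\theta}^{\tau} \BM{e}^{(a)} = \BM{e}^{(a)}$ for all $a \in \MC{A}^{\pm}$. There is no real obstacle in the argument; this lemma is a routine linear-algebra reformulation that reduces checking periodicity to tracking the orbits of the canonical basis vectors, and its purpose is to justify the ``visual proof'' strategy used later in Section~\ref{106} for mixed paths.
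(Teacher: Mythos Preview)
Your proof is correct and follows essentially the same approach as the paper: one inclusion is trivial, and for the other the paper assembles the basis vectors into an invertible matrix $P$ and argues $U_{\theta}^{\tau}P = P$ implies $U_{\theta}^{\tau} = I$, which is exactly your column-by-column (or linearity-on-a-spanning-set) argument in slightly different packaging.
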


\begin{proof}
It is clear that the left-hand side is included in the right-hand side.
We show the reverse inclusion.
Suppose $U_{\theta}^{\tau}\BM{e}^{(a)} = \BM{e}^{(a)}$ for any vector $\BM{e}^{(a)} \in \MC{E}_{\MC{A}^{\pm}}$ and for some $\tau \in \MB{N}$.
Number the arc set $\MC{A^{\pm}}$ as $a_1, a_2, \dots, a_{2m}$,
where $m$ is the number of edges of $G^{\pm}$.
Let $P = [ \BM{e}^{(a_1)} \, \BM{e}^{(a_2)} \, \cdots \BM{e}^{(a_{2m})} ]$.
Then, we have $U_{\theta}^{\tau} P = P$.
Since $P$ is invertible, $U_{\theta}^{\tau}  = I$ holds.
\end{proof}

\section{Periodicity of mixed paths} \label{106}

In this section,
we discuss periodicity of mixed paths.
As a preparation,
we introduce notations for expressing dynamics of quantum walk.


Let $G = (V, \MC{A})$ be a mixed graph equipped with an $\eta$-function $\theta$.
We write the components of a vector $\Psi \in \MB{C}^{\MC{A}^{\pm}}$
on the arcs of the graph as in Figure~\ref{48}.
If a component of $\Psi$ is $0$,
we omit the arc itself corresponding to the component.
If a component of $\Psi$ is $1$,
we may omit the value on the arc corresponding to the component.

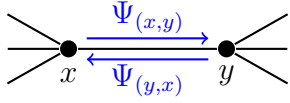
\begin{figure}[ht]
\begin{center}
\begin{tikzpicture}
[scale = 0.7,
line width = 0.8pt,
v/.style = {circle, fill = black, inner sep = 0.8mm},u/.style = {circle, fill = white, inner sep = 0.1mm}]
  \node[u] (1) at (-1.2, 0) {};
  \node[v] (2) at (0, 0) {};
  \node[v] (3) at (3, 0) {};
  \node[u] (4) at (-1.2, 0.68) {};
  \node[u] (5) at (-1.2, -0.68) {};
  \node[u] (7) at (4.2, 0) {};
  \node[u] (8) at (4.2, 0.68) {};
  \node[u] (9) at (4.2, -0.68) {};
  \node[u] (12) at (3.3, 0.2) {};
  \node[u] (13) at (5.7, 0.2) {};
  \draw (0,-0.5) node{$x$};
  \draw (3,-0.5) node{$y$};
  \draw (1) to (2);
  \draw[-] (2) to (4);
  \draw[-] (2) to (3);
  \draw[-] (5) to (2);
  \node[u] (10) at (0.3, 0.2) {};
  \node[u] (11) at (2.7, 0.2) {};
  \draw[draw= blue,->] (10) to (11);
  \node[u] (20) at (0.3, -0.2) {};
  \node[u] (21) at (2.7, -0.2) {};
  \draw[draw= blue,->] (21) to (20);
  \draw[-] (3) to (7);
  \draw[-] (3) to (8);
  \draw[-] (3) to (9);
  \draw (1.5,0.6) node[blue]{$\Psi_{(x,y)}$};
  \draw (1.5,-0.6) node[blue]{$\Psi_{(y,x)}$};
\end{tikzpicture}
\caption{The components of a vector written on the arcs of the graph} \label{48}
\end{center}
\end{figure}

We provide an example.
Let $G = (V, \MC{A})$ be the mixed graph in Figure~\ref{a01}.
We consider the $\eta$-function $\theta$ for $\eta \in [0, 2\pi)$.
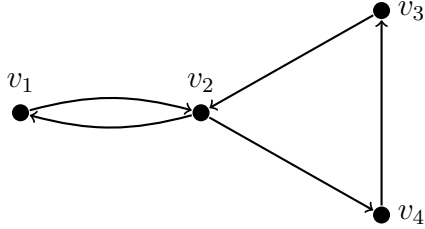
\begin{figure}[ht]
\begin{center}
\begin{tikzpicture}
[scale = 0.8,
line width = 0.8pt,
v/.style = {circle, fill = black, inner sep = 0.8mm}]
  \node[v] (1) at (-3, 0) {};
  \draw (-3,0.5) node {$v_1$};
  \node[v] (2) at (0, 0) {};
  \draw (0,0.5) node {$v_2$};
  \node[v] (3) at (3, 1.7) {};
  \draw (3.5,1.7) node {$v_3$};
  \node[v] (4) at (3, -1.7) {};
  \draw (3.5,-1.7) node {$v_4$};
  \draw[->] (1) to [bend left = 15] (2);
  \draw[->] (2) to [bend left = 15] (1);
  \draw[->] (2) to (4);
  \draw[->] (4) to (3);
  \draw[->] (3) to (2);
\end{tikzpicture}
\caption{Mixed graph $G$} \label{a01}
\end{center}
\end{figure}
We focus on the vector $\BM{e}^{((v_3, v_2))} \in \MC{E}_{\MC{A}^{\pm}}$.
The actions of the coin operator $C$ and the shift operator $S_{\theta}$ are shown in Figure~\ref{40} and~\ref{41}.
Since $U_{\theta} = S_{\theta} C$,
the action of the time evolution matrix $U_{\theta}$ is as in Figure~\ref{42}.

\begin{figure}[ht]
\begin{center}
\begin{tikzpicture}
[scale = 0.7,
line width = 0.8pt,
v/.style = {circle, fill = black, inner sep = 0.8mm},u/.style = {circle, fill = white, inner sep = 0.1mm}]
  \node[v] (1) at (-3, 0) {};
  \draw (-3,0.5) node {$v_1$};
  \node[v] (2) at (0, 0) {};
  \draw (0,0.5) node {$v_2$};
  \node[v] (3) at (3, 1.7) {};
  \draw (3.5,1.7) node {$v_3$};
  \node[v] (4) at (3, -1.7) {};
  \draw (3.5,-1.7) node {$v_4$};
  \node[u] (5) at (0.3, 0.37) {};
  \node[u] (6) at (2.7, 1.73) {};
  \draw (1) to (2);
  \draw[-] (2) to (4);
  \draw[-] (4) to (3);
  \draw[-] (3) to (2);
  \draw[draw= blue,->] (6) to (5);
\end{tikzpicture}
\raisebox{13mm}{$\quad \overset{C}{\mapsto} \quad$}
\begin{tikzpicture}
[scale = 0.7,
line width = 0.8pt,
v/.style = {circle, fill = black, inner sep = 0.8mm},u/.style = {circle, fill = white, inner sep = 0.1mm}]
  \node[v] (1) at (-3, 0) {};
  \draw (-3,0.5) node {$v_1$};
  \node[v] (2) at (0, 0) {};
  \draw (0,0.5) node {$v_2$};
  \node[v] (3) at (3, 1.7) {};
  \draw (3.5,1.7) node {$v_3$};
  \node[v] (4) at (3, -1.7) {};
  \draw (3.5,-1.7) node {$v_4$};
  \node[u] (5) at (0.3, 0.37) {};
  \node[u] (6) at (2.7, 1.73) {};
  \node[u] (7) at (-2.7, 0.2) {};
  \node[u] (8) at (-0.3, 0.2) {};
  \node[u] (9) at (0.3, -0.37) {};
  \node[u] (10) at (2.7, -1.73) {};
  \draw (1) to (2);
  \draw[-] (2) to (4);
  \draw[-] (4) to (3);
  \draw[-] (3) to (2);
  \draw[draw= blue,->] (6) to (5);
  \draw[draw= blue,->] (7) to (8);
  \draw[draw= blue,->] (10) to (9);
  \draw (-1.5,0.75) node[blue] {$\tfrac{2}{3}$};
  \draw (1.1,1.5) node[blue] {$-\tfrac{1}{3}$};
  \draw (1.2,-1.5) node[blue] {$\tfrac{2}{3}$};
\end{tikzpicture}
\caption{Action of $C$} \label{40}
\end{center}
\end{figure}
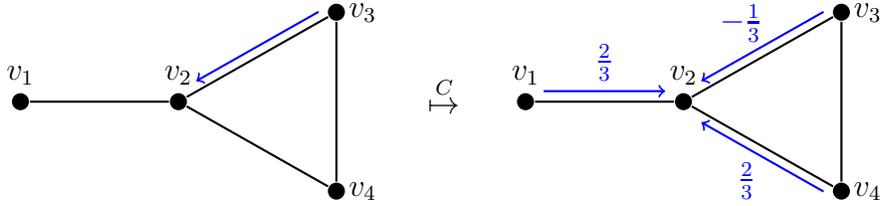

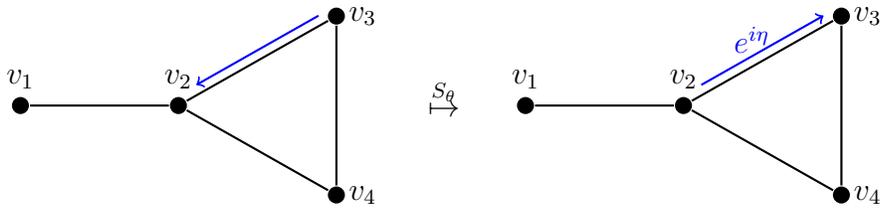
\begin{figure}[ht]
\begin{center}
\begin{tikzpicture}
[scale = 0.7,
line width = 0.8pt,
v/.style = {circle, fill = black, inner sep = 0.8mm},u/.style = {circle, fill = white, inner sep = 0.1mm}]
  \node[v] (1) at (-3, 0) {};
  \draw (-3,0.5) node {$v_1$};
  \node[v] (2) at (0, 0) {};
  \draw (0,0.5) node {$v_2$};
  \node[v] (3) at (3, 1.7) {};
  \draw (3.5,1.7) node {$v_3$};
  \node[v] (4) at (3, -1.7) {};
  \draw (3.5,-1.7) node {$v_4$};
  \node[u] (5) at (0.3, 0.37) {};
  \node[u] (6) at (2.7, 1.73) {};
  \draw (1) to (2);
  \draw[-] (2) to (4);
  \draw[-] (4) to (3);
  \draw[-] (3) to (2);
  \draw[draw= blue,->] (6) to (5);
\end{tikzpicture}
\raisebox{13mm}{$\quad \overset{S_{\theta}}{\mapsto} \quad$}
\begin{tikzpicture}
[scale = 0.7,
line width = 0.8pt,
v/.style = {circle, fill = black, inner sep = 0.8mm},u/.style = {circle, fill = white, inner sep = 0.1mm}]
  \node[v] (1) at (-3, 0) {};
  \draw (-3,0.5) node {$v_1$};
  \node[v] (2) at (0, 0) {};
  \draw (0,0.5) node {$v_2$};
  \node[v] (3) at (3, 1.7) {};
  \draw (3.5,1.7) node {$v_3$};
  \node[v] (4) at (3, -1.7) {};
  \draw (3.5,-1.7) node {$v_4$};
  \node[u] (5) at (0.3, 0.37) {};
  \node[u] (6) at (2.7, 1.73) {};
  \draw (1.3,1.25) node[blue]{$e^{i\eta }$};
  \draw (1) to (2);
  \draw[-] (2) to (4);
  \draw[-] (4) to (3);
  \draw[-] (3) to (2);
  \draw[draw= blue,->] (5) to (6);
\end{tikzpicture}
\caption{Action of $S_{\theta}$} \label{41}
\end{center}
\end{figure}

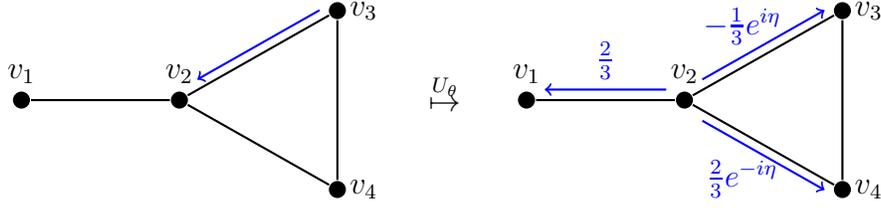
\begin{figure}[ht]
\begin{center}
\begin{tikzpicture}
[scale = 0.7,
line width = 0.8pt,
v/.style = {circle, fill = black, inner sep = 0.8mm},u/.style = {circle, fill = white, inner sep = 0.1mm}]
  \node[v] (1) at (-3, 0) {};
  \draw (-3,0.5) node {$v_1$};
  \node[v] (2) at (0, 0) {};
  \draw (0,0.5) node {$v_2$};
  \node[v] (3) at (3, 1.7) {};
  \draw (3.5,1.7) node {$v_3$};
  \node[v] (4) at (3, -1.7) {};
  \draw (3.5,-1.7) node {$v_4$};
  \node[u] (5) at (0.3, 0.37) {};
  \node[u] (6) at (2.7, 1.73) {};
  \draw (1) to (2);
  \draw[-] (2) to (4);
  \draw[-] (4) to (3);
  \draw[-] (3) to (2);
  \draw[draw= blue,->] (6) to (5);
\end{tikzpicture}
\raisebox{13mm}{$\quad \overset{U_{\theta}}{\mapsto} \quad$}
\begin{tikzpicture}
[scale = 0.7,
line width = 0.8pt,
v/.style = {circle, fill = black, inner sep = 0.8mm},u/.style = {circle, fill = white, inner sep = 0.1mm}]
  \node[v] (1) at (-3, 0) {};
  \draw (-3,0.5) node {$v_1$};
  \node[v] (2) at (0, 0) {};
  \draw (0,0.5) node {$v_2$};
  \node[v] (3) at (3, 1.7) {};
  \draw (3.5,1.7) node {$v_3$};
  \node[v] (4) at (3, -1.7) {};
  \draw (3.5,-1.7) node {$v_4$};
  \node[u] (5) at (0.3, 0.37) {};
  \node[u] (6) at (2.7, 1.73) {};
  \node[u] (7) at (-2.7, 0.2) {};
  \node[u] (8) at (-0.3, 0.2) {};
  \node[u] (9) at (0.3, -0.37) {};
  \node[u] (10) at (2.7, -1.73) {};
  \draw (1.1,1.4) node[blue]{$-\tfrac{1}{3}e^{i\eta}$};
  \draw (-1.5,0.75) node[blue]{$\tfrac{2}{3}$};
  \draw (1.1,-1.5) node[blue]{$\tfrac{2}{3} e^{-i\eta}$};
  \draw (1) to (2);
  \draw[-] (2) to (4);
  \draw[-] (4) to (3);
  \draw[-] (3) to (2);
  \draw[draw= blue,->] (5) to (6);
  \draw[draw= blue,->] (8) to (7);
  \draw[draw= blue,->] (9) to (10);
\end{tikzpicture}
\caption{Action of $U_{\theta}$} \label{42}
\end{center}
\end{figure}

\begin{lem} \label{25}
{\it
Let $G = (V, \MC{A})$ be a mixed graph equipped with an $\eta$-function $\theta$,
and let $a \in \MC{A}^{\pm}$.
We have the following:
\begin{enumerate}[(1)]
\item If $\deg t(a) = 1$, then $U_{\theta} \BM{e}^{(a)} = e^{\theta(a) i}\BM{e}^{(a^{-1})}$; and
\item If $\deg t(a) = 2$, then $U_{\theta} \BM{e}^{(a)} = e^{-\theta(b)i}\BM{e}^{(b)}$,
where $b$ is the arc in $\{ z \in \MC{A}^{\pm} \mid o(z) = t(a) \} \setminus \{a^{-1}\}$.
\end{enumerate}
}
\end{lem}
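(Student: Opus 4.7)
The plan is to apply Lemma~\ref{22} componentwise. For any $z \in \MC{A}^{\pm}$, the $z$-component of $U_{\theta} \BM{e}^{(a)}$ equals $(U_{\theta})_{z,a}$, which by Lemma~\ref{22} is
\[ (U_{\theta})_{z,a} = e^{-\theta(z) i} \marukakko{ \frac{2}{\deg_G t(a)} \delta_{o(z), t(a)} - \delta_{z, a^{-1}} }. \]
Thus, the only arcs $z$ that can contribute are $z = a^{-1}$ and those $z$ with $o(z) = t(a)$. Since $o(a^{-1}) = t(a)$ as well, the support of $U_{\theta} \BM{e}^{(a)}$ is contained in $\{ z \in \MC{A}^{\pm} \mid o(z) = t(a) \}$. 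The size of this set is exactly $\deg_G t(a)$, so the two cases of the lemma correspond to its two possible small values.

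For (1), assume $\deg t(a) = 1$. Then the unique arc $z$ with $o(z) = t(a)$ is $z = a^{-1}$. For this $z$ one computes directly from the formula above that $(U_{\theta})_{a^{-1}, a} = e^{-\theta(a^{-1})i}(2 - 1) = e^{\theta(a)i}$, using $\theta(a^{-1}) = -\theta(a)$. All other components vanish, which yields $U_{\theta}\BM{e}^{(a)} = e^{\theta(a)i}\BM{e}^{(a^{-1})}$.

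For (2), assume $\deg t(a) = 2$. Then there are exactly two arcs with origin $t(a)$, namely $a^{-1}$ and the arc $b$ defined in the statement. Plugging $z = a^{-1}$ into the formula gives $e^{\theta(a)i}(1 - 1) = 0$, so $a^{-1}$ drops out. Plugging $z = b$ gives $(U_{\theta})_{b,a} = e^{-\theta(b)i}(1 - 0) = e^{-\theta(b)i}$, and all remaining components vanish. This yields $U_{\theta} \BM{e}^{(a)} = e^{-\theta(b)i}\BM{e}^{(b)}$.

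I do not anticipate any real obstacle: the proof is essentially a direct substitution into Lemma~\ref{22} together with the observation that the degree hypothesis pins down the arcs originating at $t(a)$. The only point that requires a moment of care is the sign bookkeeping in case (1), where one must remember the identity $\theta(a^{-1}) = -\theta(a)$ to turn $e^{-\theta(a^{-1})i}$ into $e^{\theta(a)i}$.
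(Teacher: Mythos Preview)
Your proof is correct and follows essentially the same approach as the paper: both compute $(U_{\theta}\BM{e}^{(a)})_z = (U_{\theta})_{z,a}$ via Lemma~\ref{22} and then split according to whether $\deg t(a)$ is $1$ or $2$, using the identity $\theta(a^{-1}) = -\theta(a)$ in case~(1). The paper's argument is organized slightly differently (it keeps the Kronecker deltas explicit rather than plugging in specific $z$), but the substance is identical.
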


\begin{proof}
Let $U_{\theta} = U_{\theta}(G)$.
First, by Lemma~\ref{22}, we have
\begin{align*}
(U_{\theta} \BM{e}^{(a)})_{z}
&= \sum_{w \in \MC{A}^{\pm}} (U_{\theta})_{z,w} (\BM{e}^{(a)})_{w} \\
&= (U_{\theta})_{z,a} \\
&= e^{-\theta(z) i} \marukakko{ \frac{2}{\deg_{G} t(a)} \delta_{o(z), t(a)} - \delta_{z, a^{-1}} }.
\end{align*}
Consider the case of $\deg t(a) = 1$.
Then $o(z) = t(a)$ if and only if $z = a^{-1}$.
Thus,
\[ (U_{\theta} \BM{e}^{(a)})_{z} = e^{-\theta(a^{-1}) i} ( 2 \delta_{z, a^{-1}} - \delta_{z, a^{-1}} )
= e^{\theta(a) i} \delta_{z, a^{-1}}.
\]
We have $U_{\theta} \BM{e}^{(a)} = e^{\theta(a) i}\BM{e}^{(a^{-1})}$.
We next consider the case of $\deg t(a) = 2$.
Then,
\begin{align*}
(U_{\theta} e_a)_{z} &= e^{-\theta(z) i} (\delta_{o(z), t(a)} - \delta_{z, a^{-1}}) \\
&= \begin{cases}
0 \quad &\text{if $z=a^{-1}$,} \\
e^{-\theta(b) i} \quad &\text{if $z = b$,} \\
0 \quad &\text{otherwise.}
\end{cases}
\end{align*}
Therefore, we have $U_{\theta} \BM{e}^{(a)} = e^{-\theta(b)i}\BM{e}^{(b)}$.
\end{proof}
The above lemma is illustrated as in Figure~\ref{43} and~\ref{44}.
Now, we discuss periodicity of mixed paths.
The following is our second main theorem.
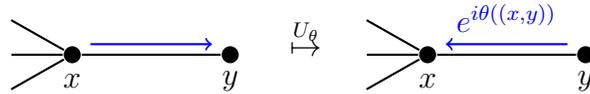
\begin{figure}[ht]
\begin{center}
\begin{tikzpicture}
[scale = 0.7,
line width = 0.8pt,
v/.style = {circle, fill = black, inner sep = 0.8mm},u/.style = {circle, fill = white, inner sep = 0.1mm}]
  \node[u] (1) at (-1.2, 0) {};
  \node[v] (2) at (0, 0) {};
  \node[v] (3) at (3, 0) {};
  \node[u] (4) at (-1.2, 0.68) {};
  \node[u] (5) at (-1.2, -0.68) {};
  \node[u] (6) at (0.3, 0.2) {};
  \node[u] (7) at (2.7, 0.2) {};
  \draw (0,-0.5) node{$x$};
  \draw (3,-0.5) node{$y$};
  \draw (1) to (2);
  \draw[-] (2) to (4);
  \draw[-] (2) to (3);
  \draw[-] (5) to (2);
  \draw[draw= blue,->] (6) to (7);
\end{tikzpicture}
\raisebox{6mm}{$\quad \overset{U_{\theta}}{\mapsto} \quad$}
\begin{tikzpicture}
[scale = 0.7,
line width = 0.8pt,
v/.style = {circle, fill = black, inner sep = 0.8mm},u/.style = {circle, fill = white, inner sep = 0.1mm}]
  \node[u] (1) at (-1.2, 0) {};
  \node[v] (2) at (0, 0) {};
  \node[v] (3) at (3, 0) {};
  \node[u] (4) at (-1.2, 0.68) {};
  \node[u] (5) at (-1.2, -0.68) {};
  \node[u] (6) at (0.3, 0.2) {};
  \node[u] (7) at (2.7, 0.2) {};
  \draw (1.5,0.7) node[blue]{$e^{i\theta((x,y)) }$};
  \draw (0,-0.5) node{$x$};
  \draw (3,-0.5) node{$y$};
  \draw (1) to (2);
  \draw[-] (2) to (4);
  \draw[-] (2) to (3);
  \draw[-] (5) to (2);
  \draw[draw= blue,->] (7) to (6);
\end{tikzpicture}
\caption{Illustration of Lemma~\ref{25} (1)} \label{43}
\end{center}
\end{figure}

\begin{figure}[ht]
\begin{center}
\begin{tikzpicture}
[scale = 0.7,
line width = 0.8pt,
v/.style = {circle, fill = black, inner sep = 0.8mm},u/.style = {circle, fill = white, inner sep = 0.1mm}]
  \node[u] (1) at (-1.2, 0) {};
  \node[v] (2) at (0, 0) {};
  \node[v] (3) at (3, 0) {};
  \node[u] (4) at (-1.2, 0.68) {};
  \node[u] (5) at (-1.2, -0.68) {};
  \node[v] (6) at (6, 0) {};
  \node[u] (7) at (7.2, 0) {};
  \node[u] (8) at (7.2, 0.68) {};
  \node[u] (9) at (7.2, -0.68) {};
  \node[u] (10) at (0.3, 0.2) {};
  \node[u] (11) at (2.7, 0.2) {};
  \node[u] (12) at (3.3, 0.2) {};
  \node[u] (13) at (5.7, 0.2) {};
  \draw (0,-0.5) node{$x$};
  \draw (3,-0.5) node{$y$};
  \draw (6,-0.5) node{$z$};
  \draw (1) to (2);
  \draw[-] (2) to (4);
  \draw[-] (2) to (3);
  \draw[-] (5) to (2);
  \draw[draw= blue,->] (10) to (11);
  \draw[-] (3) to (6);
  \draw[-] (6) to (7);
  \draw[-] (6) to (8);
  \draw[-] (6) to (9);
\end{tikzpicture}
\raisebox{6mm}{$\quad \overset{U_{\theta}}{\mapsto} \quad$}
\begin{tikzpicture}
[scale = 0.7,
line width = 0.8pt,
v/.style = {circle, fill = black, inner sep = 0.8mm},u/.style = {circle, fill = white, inner sep = 0.1mm}]
  \node[u] (1) at (-1.2, 0) {};
  \node[v] (2) at (0, 0) {};
  \node[v] (3) at (3, 0) {};
  \node[u] (4) at (-1.2, 0.68) {};
  \node[u] (5) at (-1.2, -0.68) {};
  \node[v] (6) at (6, 0) {};
  \node[u] (7) at (7.2, 0) {};
  \node[u] (8) at (7.2, 0.68) {};
  \node[u] (9) at (7.2, -0.68) {};
  \node[u] (10) at (0.3, 0.2) {};
  \node[u] (11) at (2.7, 0.2) {};
  \node[u] (12) at (3.3, 0.2) {};
  \node[u] (13) at (5.7, 0.2) {};
  \draw (4.5,0.7) node[blue]{$e^{-i\theta((y,z)) }$};
  \draw (0,-0.5) node{$x$};
  \draw (3,-0.5) node{$y$};
  \draw (6,-0.5) node{$z$};
  \draw (1) to (2);
  \draw[-] (2) to (4);
  \draw[-] (2) to (3);
  \draw[-] (5) to (2);
  \draw[draw= blue,->] (12) to (13);
  \draw[-] (3) to (6);
  \draw[-] (6) to (7);
  \draw[-] (6) to (8);
  \draw[-] (6) to (9);
\end{tikzpicture}
\caption{Illustration of Lemma~\ref{25} (2)} \label{44}
\end{center}
\end{figure}

\begin{thm}
{\it
Let $G = (V, \MC{A})$ be a mixed path on $n$ vertices equipped with an $\eta$-function $\theta$.
Then $G$ is periodic for any $\eta \in [0, 2\pi)$, and the period is $2(n-1)$.
}
\end{thm}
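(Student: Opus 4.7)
The plan is to invoke Lemma~\ref{55} and verify directly that $U_\theta^{2(n-1)} \BM{e}^{(a)} = \BM{e}^{(a)}$ for every canonical basis vector $\BM{e}^{(a)}$, $a \in \MC{A}^{\pm}$, by tracing the orbit of $\BM{e}^{(a)}$ under $U_\theta$ with the aid of Lemma~\ref{25}. Label the vertices of the mixed path as $v_1, v_2, \dots, v_n$ and set $\alpha_i = \theta((v_i, v_{i+1}))$ so that $\theta((v_{i+1}, v_i)) = -\alpha_i$. Since $|\MC{A}^{\pm}| = 2(n-1)$, the target period already matches the total number of arcs; the goal is to show that the orbit cycles through all of them with an accumulated phase that telescopes to zero.

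Take the seed $\BM{e}^{((v_1, v_2))}$. All interior vertices $v_2, \dots, v_{n-1}$ have degree $2$, so Lemma~\ref{25}(2) forces each step to shift to the next arc along the path:
\[
U_\theta^{k} \BM{e}^{((v_1,v_2))} = e^{-(\alpha_2 + \cdots + \alpha_{k+1}) i} \BM{e}^{((v_{k+1}, v_{k+2}))}, \qquad k = 0, 1, \dots, n-2.
\]
At step $n-1$ the terminus is $v_n$ with degree $1$, so by Lemma~\ref{25}(1) the arc flips with an additional phase $e^{\alpha_{n-1} i}$. The next $n-2$ applications traverse the path backwards through interior vertices, each contributing a factor $e^{\alpha_{n-k-1} i}$ (since $\theta((v_{n-k}, v_{n-k-1})) = -\alpha_{n-k-1}$), until we reach $(v_2, v_1)$ at step $2n-3$. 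A final reflection at $v_1$ contributes $e^{-\alpha_1 i}$ and returns the arc to $(v_1, v_2)$, so summing exponents yields
\[
-(\alpha_2 + \cdots + \alpha_{n-1}) + \alpha_{n-1} + (\alpha_1 + \cdots + \alpha_{n-2}) - \alpha_1 = 0,
\]
whence $U_\theta^{2(n-1)} \BM{e}^{((v_1,v_2))} = \BM{e}^{((v_1,v_2))}$. The same telescoping works starting from any other arc (the orbit is a cyclic shift of the one above), so $U_\theta^{2(n-1)} = I$ by Lemma~\ref{55}.

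For minimality, observe that the $2(n-1)$ intermediate vectors produced above are supported on $2(n-1)$ pairwise distinct arcs, hence no $\tau < 2(n-1)$ can satisfy $U_\theta^\tau \BM{e}^{((v_1,v_2))} = \BM{e}^{((v_1,v_2))}$. The main obstacle is not conceptual but notational: each application of $U_\theta$ toggles the arc's direction and therefore flips the sign of $\theta$, so one must keep careful track of signs when applying Lemma~\ref{25}. The pictorial conventions of Figures~\ref{43} and~\ref{44} make this essentially visual, as the orbit is a single arrow bouncing back and forth along the path and depositing phases that cancel symmetrically, delivering the ``visual proof'' promised in the outline. The only degenerate case is $n=2$, in which both endpoints have degree $1$ and only Lemma~\ref{25}(1) is used; a two-step calculation then yields $U_\theta^2 = I$, matching $2(n-1) = 2$.
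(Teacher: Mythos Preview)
Your proof is correct, but it follows a somewhat different route from the paper's. The paper first invokes Proposition~\ref{12}, Theorem~\ref{21} and Lemma~\ref{13} to argue that $G$ and its underlying undirected path $G^{\pm}=P_n$ are $\NH_\eta$-cospectral, hence $U_\theta(G)$ and $U_\theta(P_n)$ share the same spectrum, so it suffices to treat the undirected path. On $P_n$ the $\eta$-function is identically zero, so Lemma~\ref{25} gives $U_\theta(P_n)^{2(n-1)}\BM{e}^{(a)}=\BM{e}^{(a)}$ with no phase bookkeeping at all. You instead work directly on the mixed path, carry the phases $\alpha_i=\theta((v_i,v_{i+1}))$ through the orbit, and observe that they telescope to zero after a full round trip. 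Your approach is more self-contained---it bypasses the spectral mapping theorem and the cospectrality result for mixed trees---and it also makes the minimality of $2(n-1)$ explicit via the distinct-arc argument, whereas the paper leaves this implicit in the visual dynamics. The paper's reduction, on the other hand, avoids all phase tracking and highlights that the period depends only on the underlying graph, a fact your computation confirms but does not foreground.
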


\begin{proof}
By Proposition~\ref{12},
$G$ and $G^{\pm}$ are $\NH_{\eta}$-cospectral since $G$ is a mixed tree.
By Theorem~\ref{21},
$U_{\theta}(G)$ and $U_{\theta}(G^{\pm})$ have the same eigenvalues.
From Lemma~\ref{13},
periodicity of $G$ and its period are determined by the eigenvalues of the time evolution matrix.
Thus, it is sufficient to discuss only periodicity of $G^{\pm}$,
which is the undirected path graph $P_n$ on $n$ vertices.
By Lemma~\ref{25}, we have
$U_{\theta}(P_n)^{2(n-1)} \BM{e}^{(a)} = \BM{e}^{(a)}$
for any vector $\BM{e}^{(a)} \in \MC{E}_{\MC{A}^{\pm}}$.
The dynamics is as follows:
\begin{align*}
\begin{tikzpicture}
[scale = 0.8,
line width = 0.8pt,
v/.style = {circle, fill = black, inner sep = 0.8mm},u/.style = {circle, fill = white, inner sep = 0.1mm}]
  \node[v] (1) at (-2, 0) {};
  \node[v] (2) at (0, 0) {};
  \node[v] (3) at (2, 0) {};
  \node[u] (4) at (2.5, 0) {};
  \draw (2.75,0) node{$\dots$};
  \node[u] (5) at (3, 0) {};
  \node[v] (6) at (3.5, 0) {};
  \node[v] (7) at (5.5, 0) {};
  \node[u] (8) at (-1.8, 0.2) {};
  \node[u] (9) at (-0.2, 0.2) {};
  \draw (1) to (2);
  \draw[-] (2) to (3);
  \draw[-] (3) to (4);
  \draw[-] (5) to (6);
  \draw[-] (6) to (7);
  \draw[draw= blue,->] (8) to (9);
\end{tikzpicture}
& \raisebox{1mm}{$\quad \overset{U_{\theta}}{\mapsto} \quad$}
\begin{tikzpicture}
[scale = 0.8,
line width = 0.8pt,
v/.style = {circle, fill = black, inner sep = 0.8mm},u/.style = {circle, fill = white, inner sep = 0.1mm}]
  \node[v] (1) at (-2, 0) {};
  \node[v] (2) at (0, 0) {};
  \node[v] (3) at (2, 0) {};
  \node[u] (4) at (2.5, 0) {};
  \draw (2.75,0) node{$\dots$};
  \node[u] (5) at (3, 0) {};
  \node[v] (6) at (3.5, 0) {};
  \node[v] (7) at (5.5, 0) {};
  \node[u] (8) at (0.2, 0.2) {};
  \node[u] (9) at (1.8, 0.2) {};
  \draw (1) to (2);
  \draw[-] (2) to (3);
  \draw[-] (3) to (4);
  \draw[-] (5) to (6);
  \draw[-] (6) to (7);
  \draw[draw= blue,->] (8) to (9);
\end{tikzpicture} \\
& \raisebox{1mm}{$\quad \overset{U_{\theta}}{\mapsto} \quad \cdots$} \\
& \quad \hspace{2mm} \vdots \\
& \raisebox{1mm}{$\quad \overset{U_{\theta}}{\mapsto} \quad$}
\begin{tikzpicture}
[scale = 0.8,
line width = 0.8pt,
v/.style = {circle, fill = black, inner sep = 0.8mm},u/.style = {circle, fill = white, inner sep = 0.1mm}]
  \node[v] (1) at (-2, 0) {};
  \node[v] (2) at (0, 0) {};
  \node[v] (3) at (2, 0) {};
  \node[u] (4) at (2.5, 0) {};
  \draw (2.75,0) node{$\dots$};
  \node[u] (5) at (3, 0) {};
  \node[v] (6) at (3.5, 0) {};
  \node[v] (7) at (5.5, 0) {};
  \node[u] (8) at (3.7, 0.2) {};
  \node[u] (9) at (5.3, 0.2) {};
  \draw (1) to (2);
  \draw[-] (2) to (3);
  \draw[-] (3) to (4);
  \draw[-] (5) to (6);
  \draw[-] (6) to (7);
  \draw[draw= blue,->] (8) to (9);
\end{tikzpicture} \\
&\raisebox{1mm}{$\quad \overset{U_{\theta}}{\mapsto} \quad$}
\begin{tikzpicture}
[scale = 0.8,
line width = 0.8pt,
v/.style = {circle, fill = black, inner sep = 0.8mm},u/.style = {circle, fill = white, inner sep = 0.1mm}]
  \node[v] (1) at (-2, 0) {};
  \node[v] (2) at (0, 0) {};
  \node[v] (3) at (2, 0) {};
  \node[u] (4) at (2.5, 0) {};
  \draw (2.75,0) node{$\dots$};
  \node[u] (5) at (3, 0) {};
  \node[v] (6) at (3.5, 0) {};
  \node[v] (7) at (5.5, 0) {};
  \node[u] (8) at (5.3, -0.2) {};
  \node[u] (9) at (3.7, -0.2) {};
  \draw (1) to (2);
  \draw[-] (2) to (3);
  \draw[-] (3) to (4);
  \draw[-] (5) to (6);
  \draw[-] (6) to (7);
  \draw[draw= blue,->] (8) to (9);
\end{tikzpicture} \\
& \raisebox{1mm}{$\quad \overset{U_{\theta}}{\mapsto} \quad \cdots$} \\
& \quad \hspace{2mm} \vdots \\
& \raisebox{1mm}{$\quad \overset{U_{\theta}}{\mapsto} \quad$}
\begin{tikzpicture}
[scale = 0.8,
line width = 0.8pt,
v/.style = {circle, fill = black, inner sep = 0.8mm},u/.style = {circle, fill = white, inner sep = 0.1mm}]
  \node[v] (1) at (-2, 0) {};
  \node[v] (2) at (0, 0) {};
  \node[v] (3) at (2, 0) {};
  \node[u] (4) at (2.5, 0) {};
  \draw (2.75,0) node{$\dots$};
  \node[u] (5) at (3, 0) {};
  \node[v] (6) at (3.5, 0) {};
  \node[v] (7) at (5.5, 0) {};
  \node[u] (8) at (-1.8, -0.2) {};
  \node[u] (9) at (-0.2, -0.2) {};
  \draw (1) to (2);
  \draw[-] (2) to (3);
  \draw[-] (3) to (4);
  \draw[-] (5) to (6);
  \draw[-] (6) to (7);
  \draw[draw= blue,->] (9) to (8);
\end{tikzpicture} \\
& \raisebox{1mm}{$\quad \overset{U_{\theta}}{\mapsto} \quad$}
\begin{tikzpicture}
[scale = 0.8,
line width = 0.8pt,
v/.style = {circle, fill = black, inner sep = 0.8mm},u/.style = {circle, fill = white, inner sep = 0.1mm}]
  \node[v] (1) at (-2, 0) {};
  \node[v] (2) at (0, 0) {};
  \node[v] (3) at (2, 0) {};
  \node[u] (4) at (2.5, 0) {};
  \draw (2.75,0) node{$\dots$};
  \node[u] (5) at (3, 0) {};
  \node[v] (6) at (3.5, 0) {};
  \node[v] (7) at (5.5, 0) {};
  \node[u] (8) at (-1.8, 0.2) {};
  \node[u] (9) at (-0.2, 0.2) {};
  \draw (1) to (2);
  \draw[-] (2) to (3);
  \draw[-] (3) to (4);
  \draw[-] (5) to (6);
  \draw[-] (6) to (7);
  \draw[draw= blue,->] (8) to (9);
\end{tikzpicture}
\end{align*}
By Lemma~\ref{55},
we see that $G$ is periodic whose period is $2(n-1)$.
\end{proof}

Note that the periodicity of the undirected paths has actually studied
in \cite{KSTY2018} by eigenvalue analysis.
In this paper, we have proven the same fact in a different way.

\section{Periodicity of mixed cycles} \label{107}

Finally, we discuss periodicity of mixed cycles.
Strategy is similar to the mixed paths, but the discussion is more complicated.
Recall that 
a mixed cycle $G$ on $n$ vertices is $H_{\eta}$-cospectral with $C_n^j$ for some $j \in \{0,1,\dots, n\}$ by Proposition~\ref{03}.
For $m_1, m_2 \in \MB{Z}$,
the greatest common divisor of $m_1$ and $m_2$ is denoted by $(m_1, m_2)$.
Note that $(0, m_1) = m_1$.
The following is our third main theorem.

\begin{thm}
{\it
Let $G = (V, \MC{A})$ be a mixed cycle on $n$ vertices equipped with an $\eta$-function $\theta$.
Then,
$G$ is periodic if and only if $\eta \in \MB{Q}\pi$.
In addition,
we suppose that $\eta \in \MB{Q}\pi$ and the mixed cycle $G$ is $H_{\eta}$-cospectral with $C_n^j$.
Let $\eta = \frac{p}{q}\pi$, where $p$ and $q$ are coprime.
Then, the period $\tau$ of $G$ is the following:
\begin{equation} \label{65}
\tau =
\begin{cases}
\frac{2qn}{(j, 2q)} \quad &\text{if $p$ is odd,} \\
\frac{qn}{(j, q)} \quad &\text{if $p$ is even.} \\
\end{cases}
\end{equation}
}
\end{thm}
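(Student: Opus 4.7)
The plan is to reduce the periodicity question to an explicit eigenvalue analysis via the spectral mapping theorem, and then to a number-theoretic condition on roots of unity. First, by Proposition~\ref{03}, $G$ is $H_{\eta}$-cospectral with $C_n^j$ for some $j \in \{0,1,\dots,n\}$; since $G$ is $2$-regular we have $\NH_{\eta}(G) = \tfrac{1}{2} H_{\eta}(G)$, so the two are also $\NH_{\eta}$-cospectral, and Theorem~\ref{21} then gives $\Spec(U_{\theta}(G)) = \Spec(U_{\theta}(C_n^j))$. By Lemma~\ref{13} the period depends only on this spectrum, so it suffices to analyze $C_n^j$.

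The central computation is to diagonalize $H_{\eta}(C_n^j)$. The product of arc weights around the cycle is $e^{ij\eta}$, and by an appropriate switching (which is unitary conjugation by a diagonal matrix and hence preserves spectrum) one can redistribute the phase so that every arc in one orientation carries the weight $e^{ij\eta/n}$ and the reverse carries $e^{-ij\eta/n}$. The resulting matrix is a twisted circulant, and the Fourier basis $v_k = (1,\omega^k,\dots,\omega^{(n-1)k})^{\top}/\sqrt{n}$ with $\omega = e^{2\pi i/n}$ diagonalizes it; a one-line calculation yields eigenvalues $2\cos\bigl((2\pi k + j\eta)/n\bigr)$ for $k = 0,1,\dots,n-1$. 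Applying Theorem~\ref{21} to $\NH_{\eta} = \tfrac{1}{2}H_{\eta}$ then gives
\[
\Spec(U_{\theta}) = \bigl\{ e^{\pm i (2\pi k + j\eta)/n} : k = 0, 1, \dots, n-1 \bigr\}.
\]

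From here the argument is purely arithmetic. By Lemma~\ref{13}, $G$ is periodic if and only if there exists $\tau \in \MB{N}$ with $(2\pi k + j\eta)\tau / n \in 2\pi \MB{Z}$ for every $k$. Taking the difference of the $k=0$ and $k=1$ conditions forces $n \mid \tau$; writing $\tau = ns$, the remaining requirement reduces to $js\eta \in 2\pi \MB{Z}$. This is solvable in $s \in \MB{N}$ precisely when $\eta \in \MB{Q}\pi$ (in the generic case $j \geq 1$; the degenerate case $j=0$ corresponds to the undirected cycle, which is periodic of period $n$ for any $\eta$ and so accords with the formula). Writing $\eta = (p/q)\pi$ with $\gcd(p,q) = 1$, the condition becomes $2q \mid jsp$. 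A case analysis on $\gcd(p,2q)$, which equals $1$ when $p$ is odd and $2$ when $p$ is even (noting that $p$ even forces $q$ odd), yields the minimal $s = 2q/(j,2q)$ or $s = q/(j,q)$ respectively, and $\tau = ns$ recovers formula~(\ref{65}).

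The main obstacle is the eigenvalue computation in the second paragraph. One must verify carefully that the proposed switching is realizable as a valid $\eta$-Hermitian switching and that the Fourier diagonalization applies even though the two off-diagonals of the twisted circulant carry conjugate rather than identical phases. A minor secondary subtlety is the bookkeeping in Theorem~\ref{21} when some $\cos\bigl((2\pi k + j\eta)/n\bigr) = \pm 1$; for a cycle one has $\tfrac{1}{2}|\MC{A}^{\pm}| = |V|$, so the constants $M_{\pm 1}$ contribute no genuinely new eigenvalues and the $2n$ values above exhaust $\Spec(U_{\theta})$ with multiplicity.
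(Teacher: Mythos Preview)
Your argument is correct, but the route differs from the paper's in the central step. After the common reduction to $C_n^j$ via cospectrality, the paper bypasses any spectral computation of $H_\eta$ entirely: it invokes Lemma~\ref{25} (the degree-$2$ dynamical lemma) to see directly that $U_\theta^{n}\BM{e}^{(a)} = e^{\pm ij\eta}\BM{e}^{(a)}$ for every arc $a$, and then uses Lemma~\ref{55} rather than Lemma~\ref{13}. This immediately gives that the period is a multiple of $n$ and reduces everything to the same arithmetic condition $pjl \in 2q\MB{Z}$ that you reach. Your approach instead diagonalizes $H_\eta(C_n^j)$ explicitly (eigenvalues $2\cos\bigl((2\pi k + j\eta)/n\bigr)$) and feeds these into Theorem~\ref{21}. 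Both lines are valid; the paper's is shorter and avoids the Fourier machinery, while yours has the bonus of producing the full spectrum of $U_\theta$ explicitly, which could be useful elsewhere.

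Two remarks on the concerns you flag. First, the diagonal conjugation you use need \emph{not} be a switching in the paper's restricted sense ($\alpha:V\to\{1,e^{\pm i\eta}\}$); any unitary diagonal conjugation preserves spectrum, and that is all you require, so this worry can be dropped. Second, your handling of $M_{\pm 1}$ is correct: for the cycle $\tfrac{1}{2}|\MC{A}^\pm|=|V|$, so $M_{\pm 1}=\dim\ker(\NH_\eta\mp I)$, which exactly doubles the multiplicity of those $k$ with $\cos\bigl((2\pi k+j\eta)/n\bigr)=\pm 1$, matching the coalescence $e^{+i\cdot 0}=e^{-i\cdot 0}$ in your list of $2n$ values. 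Finally, your parenthetical about $j=0$ is the right way to read the ``only if'' clause; the paper's own proof has the same implicit convention.
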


\begin{proof}
By Proposition~\ref{03},
$G$ is $H_{\eta}$-cospectral with $C_n^j$ for some $j \in \{0,1,\dots, n\}$.
Since $G$ and $C_n^j$ are 2-regular,
they are also $\NH_{\eta}$-cospectral.
By Theorem~\ref{21},
$U_{\theta}(G)$ and $U_{\theta}(C_n^j)$ have the same eigenvalues.
From Lemma~\ref{13},
periodicity is determined by the eigenvalues of the time evolution matrices,
so it is sufficient to discuss only periodicity of $C_n^j$ for $j \in \{0,1,\dots, n\}$.
Let $U_{\theta} = U_{\theta}(C_n^j)$,
and let $a$ be an arbitrary  arc of $C_n^j$.
By Lemma~\ref{25}, we have
\begin{equation} \label{26}
U_{\theta}^{n} \BM{e^{(a)}} = e^{\pm j \eta i} \BM{e^{(a)}}.
\end{equation}
If $\eta \not\in \MB{Q}\pi$,
then $j l \eta \not\in 2\pi \MB{Z}$ for any $l \in \MB{N}$,
so the mixed graph is not periodic.
On the other hand,
we suppose $\eta \in \MB{Q}\pi$, say $\eta = \frac{p}{q}\pi$.
Then,
\[ U_{\theta}^{2qn} \BM{e^{(a)}} = e^{\pm 2qj \cdot \frac{p}{q} \pi i} \BM{e^{(a)}}
= e^{\pm 2pj \pi i} \BM{e^{(a)}} = \BM{e^{(a)}}. \]
Thus, the mixed graph is periodic.

Next, we determine the period $\tau$.
Let $\eta \in \MB{Q}\pi$ and let $\eta = \frac{p}{q}\pi$, where $p$ and $q$ are coprime.
By Lemma~\ref{25},
the vector $U_{\theta}^k \BM{e^{(a)}}$ coincides with a complex multiple of $\BM{e^{(a)}}$
if and only if $k$ is a multiple of $n$.
Thus, the period $\tau$ is a multiple of $n$,
namely, $\tau = ln$ for some $l \in \MB{N}$. 
Then,
\[ \BM{e^{(a)}} = U_{\theta}^{ln} \BM{e^{(a)}} = e^{\pm l j \cdot \frac{p}{q} \pi i} \BM{e^{(a)}}, \]
so $\frac{pjl}{q} \pi \in 2\pi\MB{Z}$, i.e., $pjl \in 2q \MB{Z}$.
We would like to find $\min \{ l \in \MB{N} \mid pjl \in 2q \MB{Z} \}$.
If $j = 0$, we have $\min \{ l \in \MB{N} \mid pjl \in 2q \MB{Z} \} = 1$,
so the period is $n$.
This satisfies (\ref{65}).
We assume that $j > 0$ in the discussion below.

First, we consider the case where $p$ is odd.
We will show that
\[ \min \{ l \in \MB{N} \mid pjl \in 2q \MB{Z} \} = \frac{2q}{(j,2q)}. \]
Since $p$ is odd and $(p,q) = 1$, we have
\begin{equation} \label{27}
(p,2q) = 1.
\end{equation}
Let $d = (j,2q)$.
There exists $j' \in \MB{N}$ such that
\begin{equation} \label{28}
j = j' d
\end{equation}
and
\begin{equation} \label{29}
(j', 2q) = 1.
\end{equation}
Therefore, by (\ref{27}), (\ref{28}) and (\ref{29}), we have
\begin{align*}
\min \{ l \in \MB{N} \mid pjl \in 2q \MB{Z} \}
&= \min \{ l \in \MB{N} \mid jl \in 2q \MB{Z}\} \\
&= \min \{ l \in \MB{N} \mid j' d l \in 2q \MB{Z}\}  \\
&= \min \{ l \in \MB{N} \mid d l \in 2q \MB{Z} \} \\
&= \frac{2q}{d}.
\end{align*}
The last equality is obtained from the fact that $d$ is a divisor of $2q$.

Next, we consider the case where $p$ is even.
We will show that
\begin{equation}
\min \{ l \in \MB{N} \mid pjl \in 2q \MB{Z} \} = \frac{q}{(j,q)}. \label{66}
\end{equation}
If $p=0$,
then $q=1$ since $p$ and $q$ are coprime.
We have $\min \{ l \in \MB{N} \mid pjl \in 2q \MB{Z} \} = 1$ and $\frac{q}{(j,q)} = 1$.
Equality~(\ref{66}) is satisfied in this case.
We assume that $p > 0$.
Since $p$ is even, we have
\begin{equation} \label{31}
p = 2p'
\end{equation}
for some $p' \in \MB{N}$.
Since $p$ and $q$ are coprime,
\begin{equation} \label{32}
(p',q) = 1.
\end{equation}
Let $d = (j,q)$.
There exists $j' \in \MB{N}$ such that
\begin{equation} \label{33}
j = j' d
\end{equation}
and
\begin{equation} \label{34}
(j', q) = 1.
\end{equation}
Therefore, by (\ref{31}), (\ref{32}), (\ref{33}) and (\ref{34}) we have 
\begin{align*}
\min \{ l \in \MB{N} \mid pjl \in 2q \MB{Z} \}
&= \min \{ l \in \MB{N} \mid p' j l \in q \MB{Z}\} \\
&= \min \{ l \in \MB{N} \mid jl \in q \MB{Z}\} \\
&= \min \{ l \in \MB{N} \mid j' d l \in q \MB{Z}\} \\
&= \min \{ l \in \MB{N} \mid d l \in q \MB{Z} \} \\
&= \frac{q}{d}.
\end{align*}
We have the statement.
\end{proof}

\section*{Acknowledgements}
This paper is based on the graduation theses of the second and third authors.
We would like to thank their advisor, Professor Norio Konno,
for his fruitful comments and helpful advice.
The first author is supported by JSPS KAKENHI (Grant No. 20J01175).

\end{document}